\documentclass[11pt,reqno]{amsart}  

\usepackage{amscd} 
\usepackage{mathrsfs}
\usepackage{amsmath,amsbsy,amsthm,amsfonts,amssymb,esint}
\usepackage[unicode,psdextra]{hyperref}
\usepackage{graphicx}
\usepackage{tikz}
\usepackage{bm}
\usepackage{mathabx}
\usepackage{mathtools}
\usepackage{soul}
\usepackage{color}

\usepackage{float}

\usepackage{slashed}

\usepackage[sort&compress,capitalise,nameinlink]{cleveref}
\crefname{section}{\textsection}{\textsection}
\crefname{subsection}{\textsection}{\textsection}
\crefname{appendix}{\textsection}{\textsection}
\hypersetup{
    citecolor=red,
    linkcolor=blue}
\usepackage[capitalise]{cleveref} 
\usepackage{dsfont}
\usepackage{enumerate}

\DeclareMathAlphabet\mathbfcal{OMS}{cmsy}{b}{n}

\newtheorem{theorem}{Theorem}[section]
\newtheorem{corollary}{Corollary}[section]
\newtheorem{lemma}{Lemma}[section]
\newtheorem{proposition}{Proposition}[section]
\theoremstyle{definition}
\newtheorem{remark}{Remark}[section]

\newcommand{\Con}{\mathcal{C}}
\newcommand{\psili}{\psi_{\Nh}}
\newcommand{\psih}{\hat{\psi}}
\newcommand{\Nh}{\Xi}
\newcommand{\zet}{z_*}
\newcommand{\DD}{H}
\newcommand{\DDz}{\DD_0}

\newcommand{\lz}{\lambda_{0}}
\newcommand{\DDet}{\DD_{\eta,\tau}}
\newcommand{\DDket}{\DD[k]_{\eta,\tau}}
\newcommand{\Fou}{\mathscr{F}}
\newcommand{\Hil}{H}
\newcommand{\xv}{{\bm{x}}}

\newcommand{\kv}{{\bm{k}}}
\newcommand{\vv}{{\bm{v}}}
\newcommand{\nv}{{\bm{n}}}
\newcommand{\tv}{{\bm{t}}}
\newcommand{\siv}{{\bm\sigma}}

\newcommand{\one}{\mathds{1}}

\newcommand{\dom}{\mathscr{D}}

\newcommand{\sgn}{\mathrm{sgn}}

\newcommand{\beq}{\begin{equation}}
\newcommand{\eeq}{\end{equation}}

\newcommand{\UU}{\mathcal{U}}

\newcommand{\Z}{\mathbb{Z}}
\newcommand{\CC}{\mathbb{C}}
\newcommand{\R}{\mathbb{R}}
\newcommand{\N}{\mathbb{N}}

\newcommand{\TD}{\mathcal{T}^D}

\newcommand{\bdm}{\begin{displaymath}}
\newcommand{\edm}{\end{displaymath}}
\newcommand{\bdn}{\begin{eqnarray}}
\newcommand{\edn}{\end{eqnarray}}
\newcommand{\bay}{\begin{array}{c}}
\newcommand{\eay}{\end{array}}
\newcommand{\ben}{\begin{enumerate}}
\newcommand{\een}{\end{enumerate}}
\newcommand{\beqn}{\begin{eqnarray}}
\newcommand{\eeqn}{\end{eqnarray}}

\newcommand{\be}{\begin{equation}}%
\newcommand{\ee}{\end{equation}}

\renewcommand{\geq}{\geqslant}

\title[On two-dimensional Dirac operators with critical delta-shell interactions]{On two-dimensional Dirac operators\\with critical delta-shell interactions}

\author{William Borrelli}
\address{Dipartimento di Matematica, Politecnico di Milano, P.zza Leonardo da Vinci, 32, 20133, Milano, Italy}
\email{william.borrelli@polimi.it}
\urladdr{}

\author{Pietro Carimati}
\address{Dipartimento di Matematica, Politecnico di Milano, P.zza Leonardo da Vinci, 32, 20133, Milano, Italy}
\email{pietro.carimati@mail.polimi.it}

\author{Davide Fermi}
\address{Dipartimento di Matematica, Politecnico di Milano, P.zza Leonardo da Vinci, 32, 20133, Milano, Italy
and Istituto Nazionale di Fisica Nucleare, Sezione di Milano, Italy}
\email{davide.fermi@polimi.it}
\urladdr{https://fermidavide.com}

\begin{document}

\begin{abstract} 
We study two-dimensional Dirac operators with singular interactions of electrostatic and Lorentz-scalar type, supported either on a straight line or a circle. For certain critical values of the interaction strengths, the essential spectrum of such operators comprises an isolated point lying within the mass gap. We clarify the nature of this point in both geometries. For the straight line model, this point is known to be an eigenvalue of infinite multiplicity, and we provide a detailed analysis of the corresponding eigenfunctions. By contrast, in the case of a circle, we show that the said point is not itself an eigenvalue, but rather an accumulation point of a double sequence of simple eigenvalues. In view of the high degree of symmetry of the configurations under analysis, this behavior is unexpected and our findings lead us to formulate some conjectures concerning critical singular interactions supported on generic smooth curves.
\end{abstract}

\keywords{Dirac operator, delta-shell interactions, zero-range potentials, essential spectrum}
\subjclass[2020]{Primary: 81Q05, 
							81Q10, 
							58J50, 
							35Q40. 
				}

\maketitle
\section{Introduction and main results}\label{sec:intro}
In this paper we study two-dimensional Dirac operators formally given by
\begin{equation}\label{eq:fD}
-i(\sigma_1\partial_x+\sigma_2\partial_y)+m\,\sigma_3+(\eta\, \sigma_0+\tau\, \sigma_3)\delta_\Sigma\,,
\end{equation} 
where $\sigma_0$ is the $2\times2$ identity matrix, $(\sigma_j)_{j=1,2,3}$ are the Pauli matrices
\begin{equation}\label{eq:paulim}
\sigma_1=\begin{pmatrix} 0 & 1 \\ 1 & 0 \end{pmatrix},\qquad 
\sigma_2=\begin{pmatrix} 0 & -i \\ i & 0 \end{pmatrix},\qquad
\sigma_3=\begin{pmatrix} 1 & 0 \\ 0 & -1 \end{pmatrix},
\end{equation}
while $m>0$ is the mass parameter and $\delta_\Sigma$ is the Dirac $\delta$-distribution supported on a smooth simple curve $\Sigma\subset\R^2$. The terms $\eta\, \delta_\Sigma$ and $\tau\,\sigma_3 \,\delta_\Sigma$ in \eqref{eq:fD} are usually referred to as \emph{electrostatic} and \emph{Lorentz-scalar} singular interactions, respectively, with coupling parameters $\eta,\tau\in\R$ quantifying their strength.

More generally, one may consider additional terms in \eqref{eq:fD} of the form $(\tv \cdot \siv)\,\lambda\,\delta_\Sigma$ and  $(\nv \cdot \siv)\,\omega\,\delta_\Sigma$, with $\lambda,\omega\in\R$, where $\bm{t}$ and $\bm{n}$ are the tangential and normal vectors along the curve $\Sigma$, respectively. These additional terms represent interactions of magnetic type, first introduced in \cite{CLMT22}. However, in this work we are interested only in the case $\lambda = \omega = 0$. Here and in what follows, the notation
 \[
 \vv \cdot \siv = v_1 \sigma_1 + v_2 \sigma_2 \,,
 \]
 for an arbitrary vector $\bm v=(v_1,v_2) \in \R^2$ is employed.

Operators of the form \eqref{eq:fD} have attracted a considerable interest in recent years. Generally speaking, Dirac operators with local boundary conditions in 2d have been intensively studied, in connection with the effective description of graphene and related materials, see \cite{AB08, Be08, BFSV17, BBKO22, CLMT22, BCF24, GUV25} and references therein, while the three-dimensional analogue has been motivated by the so-called MIT bag model \cite{CJJTW74} for hadrons confinement in QCD, which then led to consider more general boundary conditions in the literature, for which we refer, \emph{e.g.}, to \cite{DES89,AMV14,OP21}.

The formal operator \eqref{eq:fD} can be realized as a proper self-adjoint Hamiltonian on $L^2(\R^2\,;\CC^2)$, see \emph{e.g.} \cite{BHOP20,BMSS24}. Following these references, for a given smooth simple curve $\Sigma\subseteq\R^2$, we denote by $\Omega_\pm$ the two disjoint open regions in which $\Sigma$ divides the plane $\R^2$, namely, 
\[
\R^2 = \Omega_+ \cup \Omega_- \cup \Sigma\,,\qquad
\Omega_+ \cap \Omega_- = \varnothing\,, \qquad
\partial\Omega_+ = \partial\Omega_- = \Sigma\,.
\]
If the curve $\Sigma$ is closed, we assume $\Omega_+$ to be the bounded region by convention. Accordingly, we shall refer to the Hilbert spaces
 \[
 \Hil(\siv,\Omega_\pm):= \big\{f \in L^2(\Omega_\pm;\CC^2) \;\big|\; (\siv \cdot \nabla)f\in L^2(\Omega_\pm;\CC^2)\big\}\,.
 \]
 It can be proved that functions $f_\pm\in \Hil(\siv,\Omega_\pm)$ admit Dirichlet traces $\TD_\pm f_\pm\in H^{-1/2}(\Sigma\,;\CC^2)$. Accordingly, the self-adjoint Hamiltonian operator $\DDet$ matching the formal expression \eqref{eq:fD} can be defined as follows, in terms of suitable boundary conditions on $\Sigma$:
\begin{eqnarray}
& \hspace{-4cm}\dom(\DDet) := \big\{\psi = \psi_+\oplus\psi_-\in \Hil(\siv,\Omega_+)\oplus \Hil(\siv,\Omega_-)\quad \big| \nonumber \\
& \hspace{4cm} -i(\nv \cdot \siv)(\TD_+\psi_+-\TD_-\psi_-)=\tfrac{1}{2} (\eta\, \sigma_0+\tau\,\sigma_3)(\TD_+\psi_++\TD_-\psi_-)\big\}\,, \label{eq:Ddom} \\ 
& \DDet \psi:=(-i(\sigma_1\partial_x+\sigma_2\partial_y)+m\,\sigma_3)\psi_+\oplus(-i(\sigma_1\partial_x+\sigma_2\partial_y)+m\,\sigma_3)\psi_- \,. \label{eq:Daction}
\end{eqnarray}
Here and in the following, we choose $\nv$ to be the unit normal to $\Sigma$ pointing outwards of $\Omega_+$. 

The case of a \emph{closed} curve $\Sigma$ appears to be the most studied and various results on self-adjointness and spectral properties of the Hamiltonian $\DDet$ are available in the literature. Very recently, the extension to curves with low regularity, namely Lipschitz continuous curves such as closed polygons, was discussed in \cite{BPZ25}. On the other hand, the case of an unbounded simple curve $\Sigma$ has been considered only in special cases \cite{CJJTW74,R21,BHT23,BEHT25}. In both cases, it is well understood that $\DDet$ displays unusual spectral features, compared to the case of regular potentials (\emph{i.e.} given by multiplication operators), in the so-called \emph{critical case}
 \begin{equation}\label{eq:critical}
\eta^2-\tau^2=4\,.
\end{equation}
When $\eta^2-\tau^2\neq 4$, that is, in the non-critical regime of parameters, the essential spectrum is given by
\[
\sigma_{\mathrm{ess}}(\DDet)=(-\infty, -m]\cup[m,+\infty)\,,
\]
while the discrete spectrum inside the gap $(-m,m)$ has finite cardinality. This is what typically happens for regular potential perturbations, under quite general assumption. On the contrary, it is known that in the critical regime \eqref{eq:critical} the essential spectrum also contains an isolated point, namely,
\begin{equation}\label{eq:critess}
\sigma_{\mathrm{ess}}(\DDet)=(-\infty, -m]\cup \left\{-\tfrac{\tau}{\eta}\,m\right\}\cup[m,+\infty)\,,
\end{equation}
together with discrete eigenvalues in the gap.

Singular interactions on an unbounded simple curve $\Sigma$, to our knowledge, have been considered in a systematic way in the case of a straight line \cite{BHT23}, for curves with straight ends \cite{BEHT25}, while in \cite{R21} the general case has been studied. The quite detailed description provided in \cite{BHT23} when $\Sigma$ is a straight line shows that the spectrum may differ from the case of a closed curve. In particular, in the non-critical case the essential spectrum can be fully determined, and it is of the form 
\[
\sigma_{\mathrm{ess}}(\DDet)=(-\infty, -m]\cup[m,+\infty)\cup \mathcal I_+\cup\mathcal I_-\,,
\]
where $\mathcal I_\pm\subseteq\R$ are suitable intervals that can be explicitely characterized. As for the case of a closed curve, assuming \eqref{eq:critical} one can prove that the essential spectrum is given by \eqref{eq:critess}. Moreover, in that case there are no additional points in the spectrum, that is $\sigma(\DDet)=\sigma_{\mathrm{ess}}(\DDet)$, and the isolated point 
\begin{equation}\label{eq:z}
\zet=-\frac{\tau}{\eta}\,m \in(-m,m)
\end{equation}
is known to be an eigenvalue of infinite multiplicity. When the unbounded curve has straight ends, the localized curvature can create additional bound states. This has been proved in \cite{BEHT25} in the non-relativistic limit assuming $\eta=\tau$, and in the purely Lorentz scalar case, $\eta=0$.

Three-dimensional analogues of the models described above were studied in \cite{OV18,BH20,Be22,BMSS24}. In this context, it was recently proved in \cite{BP24} that critical combinations of electrostatic and Lorentz scalar interactions supported on a compact smooth surface give rise to an additional band of essential spectrum inside the spectral gap. Its location and size can be characterized explicitly in terms of the coupling parameters and the principal curvatures of the supporting surface.

It is further worth noting that it is known in the literature that examples of anomalous spectral features similar to those mentioned above also occurr for other type of operators. Indeed, this is for instance the case for indefinite Laplacians, namely operators obtained as self-adjoint realizations of symmetric expressions of the form $-\operatorname{div} \left( h \nabla\,\cdot\right)$, where $h$ is a piecewise constant function that changes sign across the space domain. Operators of this type appear in the study of metamaterials with negative refractive index, producing anomalous localized resonances and cloaking effects.
In this setting, it is known that for critical choices of $h$ the point $0$ belongs to the essential spectrum, and that its nature depends on the geometry of the domains where $h$ takes opposite signs: it can be either an eigenvalue of infinite degeneracy or an accumulation point of eigenvalues \cite{BK18,CPP19,Pa19,Fa25}.
Very similar effects have been recently shown to arise in the spectral theory of Maxwell operators as well \cite{Fe25}.

\medskip

In this paper we prove that the nature of the spectral point \eqref{eq:z} for 2D Dirac operators with critical $\delta$-shell interactions can change, depending on the geometry of the curve. To this aim, we consider two case studies, namely, a straight line and a circle of radius $R>0$. The paper is organized as follows.
  
In \cref{thm:mainline} and \cref{cor:eigline} we provide further qualitative and quantitative insight into the eigenfunctions when $\Sigma$ is a straight line, thereby extending the analysis in \cite{BHT23}.  
We then show in \cref{thm:maincirc} that, when $\Sigma$ is a circle, the point $\zet$ defined in \eqref{eq:z} is not itself an eigenvalue, but rather an accumulation point for a double sequence of eigenvalues in $(-m,m)\,\setminus\,\{-\tau m/\eta\}$, converging to $\zet$ both from the left and from the right. We also compute explicitly the associated normalized eigenfunctions. In \cref{cor:asef}, we examine the asymptotic behavior of the eigenvalues and eigenfunctions as the total angular momentum increases. Our results suggest that the observed features may be universal for singular interactions supported on closed curves (see \cref{rmk:universal}). Along the way, we outline three conjectures that we leave open for future investigations.

\subsection{The free Dirac operator}\label{sec:freeD}
Let us recall that the two-dimensional free Dirac operator is the self-adjoint operator on $L^2(\R^2,\CC^2)$ defined as
\begin{equation}\label{eq:D}
\DDz := -i\,\siv \cdot \nabla + m\sigma_3\,, \qquad \dom(\DDz) := H^1(\R^2,\CC^2)\,.
\end{equation}
This is unitarily equivalent, via Fourier transform, to the multiplication by the matrix-valued function
\begin{equation}\label{eq:hatD}
\DDz[\kv] = \begin{pmatrix} m & k_1-i k_2 \\ k_1+ik_2 & -m \end{pmatrix} ,
\end{equation}
where $\kv = (k_1,k_2)\in\R^2$ is the momentum coordinate. More precisely, using the unitary map
\[
	\Fou : L^2(\R^2;\CC^2) \to L^2(\R^2;\CC^2)\,, \qquad
	(\Fou \psi)(\kv) := \frac{1}{2\pi} \int_{\R^2} e^{-i \kv \cdot \xv}\, \psi(\xv)\,d\xv\,,
\]
the following direct integral decomposition holds
\[
\Fou \DDz \Fou^{-1} = \int^{\oplus}_{\R^2}\DDz[\kv]\,d\kv\,, \qquad
\dom(\Fou \DDz \Fou^{-1}) = \Fou H^1(\R^2,\CC^2)\,.
\]
The spectrum of $\DDz$ is purely absolutely continuous. In fact, it is spanned by the eigenvalues of the matrix \eqref{eq:hatD}, namely,
\[
\sigma(\DDz)=\sigma_{\mathrm{ac}}(\DDz) 
= \big\{\pm\lz(\kv)=\pm\sqrt{m^2+\vert \kv\vert^2}\;\big|\; \kv\in\R^2\big\}
=(-\infty,-m]\cup[m,+\infty)\,.
\]

\subsection{Singular interactions on a straight line}\label{sec:Dline}
Consider the case where $\Sigma$ is a straight line. Without loss of generality, we may fix
\begin{equation}\label{eq:line}
\Sigma = \Sigma_L := \big\{(0,y)\in\R^2\;|\; y\in\R\big\}\,.
\end{equation}
On account of the invariance under translations in the second variable, we may use the unitary \emph{partial} Fourier transform
\[
	\Fou_2 : L^2(\R^2,dxdy;\CC^2) \to L^2(\R^2,dxdk;\CC^2)\,, \qquad
	(\Fou_2 \psi)(x,k) := \frac{1}{\sqrt{2\pi}} \int_{\R} e^{-i k y}\, \psi(x,y)\,dy\,,
\]
to obtain the following decomposition for the free Dirac operator:
\[
\Fou_2 \DDz \Fou^{-1}_2=\int^\oplus_{\R}\DDz[k]\,dk\,,
\]
where the fiber operator at fixed $k \in\R$ is given by
\begin{equation}\label{eq:fibfree}
\DDz[k]:=\sigma_1(-i\partial_x) + k \sigma_2 + m\sigma_3\,, \qquad 
\dom\big(\DDz[k]\big):= H^1(\R,dx\,;\CC^2)\,.
\end{equation}
The spectrum of the latter is purely absolutely continuous and reads
\[
\sigma\big(\tilde{h}_0[k]\big)=\sigma_{\mathrm{ac}}\big(\tilde{h}_0[k]\big)=\left(-\infty, -\sqrt{m^2+k^2}\,\right]\cup\left[\sqrt{m^2+k^2}+\infty\right).
\]

The definition given in \eqref{eq:Ddom} and \eqref{eq:Daction} clearly applies to the present context, with $\Omega_\pm := \{(x,y)\in \R^2\,|\,\pm x > 0\}$ and $\nv = (-1,0)$. This corresponds to the fiberwise addition of singular interactions, namely,
\begin{eqnarray}
& \hspace{-4.cm} \dom(\DDket):= \Big\{\psih=\psih_+\oplus\psih_-\in H^1(\R_+\,;\CC^2)\oplus H^1(\R_-\,;\CC^2)\quad \mbox{s.t.} \nonumber\\
& \hspace{4.cm} i\sigma_1\big(\psih_+(0^+)-\psih_-(0^-)\big)=\tfrac{1}{2}(\sigma_0\eta+\sigma_3\tau)\big(\psih_+(0^+)+\psih_-(0^-)\big) \Big\}\,, \label{eq:fibline} \\ 
& \DDket\psih :=(-i\sigma_1\partial_x+\sigma_2 k + m\sigma_3)\psih_-\oplus (-i\sigma_1\partial_x+\sigma_2 k + m\sigma_3)\psih_+\,.
\end{eqnarray}

Let us point out that, whenever $\eta^2 - \tau^2 \neq -4$ (notice the minus sign on the right-hand side), the boundary condition in \eqref{eq:fibline} can be equivalently rephrased as
\begin{equation}\label{eq:bcLam}
\psih_+(0^+) = \Lambda_{\eta,\tau}\, \psih_-(0^-)\,,
\end{equation}
where
\begin{equation}\label{eq:Lamdef}
\Lambda_{\eta,\tau} := \left[i\sigma_1 - \tfrac{1}{2}(\eta \sigma_0+\tau \sigma_3)\right]^{-1} \left[i\sigma_1 + \tfrac{1}{2}(\eta \sigma_0+\tau \sigma_3)\right] .
\end{equation}

Working backwards, the full Hamiltonian $\DDet$, matching the definition in \eqref{eq:Ddom} and \eqref{eq:Daction}, can be expressed as
\begin{equation}\label{eq:Hline}
\Fou_2 \DDet \Fou_2^{-1} = \int^\oplus_{\R} \DDket\,dk\,.
\end{equation}
We refer the reader to \cite{BHT23} for the properties of \eqref{eq:Hline} with arbitrary values of the parameters $\eta,\tau \in\R$. Here, we limit ourselves to recall the following result for the critical case, see \cite[Thm. 6.2]{BHT23}.

	\begin{proposition}\label{prop:line}
		Let $\Sigma = \Sigma_L$ be as in \eqref{eq:line} and let $\eta,\tau \in \R$ with $\eta^2-\tau^2=4$. Then, the spectrum of $\DDet$ is given by
			\[
				\sigma(\DDet) = \sigma_{\mathrm{ac}}(\DDet) \cup\{\zet\}\,,
			\]
where $\sigma_{\mathrm{ac}}(\DDet)=(-\infty,m]\cup[m,+\infty)$ and
			\[
				\zet=-\frac{\tau}{\eta}\,m \in (-m,m)
			\]
		is an isolated eigenvalue of infinite multiplicity.
	\end{proposition}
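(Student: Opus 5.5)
The plan is to work fiberwise through the decomposition \eqref{eq:Hline}, analysing $\DDket$ for each fixed $k\in\R$ and then reassembling. Since $\eta^2-\tau^2=4\neq-4$, the transmission condition can be written as $\psih_+(0^+)=\Lambda_{\eta,\tau}\psih_-(0^-)$ as in \eqref{eq:bcLam}.

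\emph{Fiber spectrum.} For $z\in(-\sqrt{m^2+k^2},\sqrt{m^2+k^2})$, every $L^2$ solution of $(-i\sigma_1\partial_x+k\sigma_2+m\sigma_3-z)u=0$ on a half-line is a multiple of a single decaying spinor $e^{\mp\kappa x}v_\pm(z,k)$, with $\kappa=\sqrt{m^2+k^2-z^2}$. Explicitly, $v_\pm$ is the kernel of $\mp i\kappa\sigma_1+k\sigma_2+(m-z)\sigma_0$ acting on the first row, so one may take $v_\pm=(\,\cdot\,,\,\cdot\,)^T$ with entries linear in $k\pm\kappa$ and $m-z$. Hence $z$ is an eigenvalue of $\DDket$ exactly when $v_+(z,k)$ is parallel to $\Lambda_{\eta,\tau}v_-(z,k)$, i.e. when a $2\times2$ determinant $D(z,k)$ vanishes.

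\emph{The determinant.} Clearing the inverse in \eqref{eq:Lamdef}, this is equivalent to $\det\big[(i\sigma_1-\tfrac12(\eta+\tau\sigma_3))v_+,\;(i\sigma_1+\tfrac12(\eta+\tau\sigma_3))v_-\big]=0$. I expect this to factor as $(\eta^2-\tau^2-4)\,\kappa\cdot(\dots)+(\dots)(\eta z+\tau m)$. In the critical case the first term drops out, so $D(z,k)=0$ reduces to $\eta z+\tau m=0$ times a factor with no zeros in the gap. This gives, for \emph{every} $k$, the single eigenvalue $\zet=-\tau m/\eta$. It lies in $(-m,m)$ because $\eta^2=\tau^2+4>\tau^2$, so $|\tau/\eta|<1$.

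\emph{Reassembling.} This step is the one needing care.

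\begin{enumerate}
\item The eigenfunction of $\DDket$ can be chosen depending measurably, and indeed smoothly, on $k$, with a normalized family $\phi_k$. For any $g\in L^2(\R)$, the function $\Fou_2^{-1}[g(k)\phi_k]$ is then an eigenfunction of $\DDet$ with eigenvalue $\zet$. Because $\zet$ is an eigenvalue of every fiber, the spectral projection onto $\{\zet\}$ is $\int^\oplus P_k\,dk$, which has infinite rank. So $\zet$ is an eigenvalue of infinite multiplicity, and isolated once the rest of the spectrum is known.
\item For the remaining spectrum, note that $\DDket$ is a rank-two perturbation of the decoupled operator in the resolvent sense. Its essential spectrum is therefore $(-\infty,-\sqrt{m^2+k^2}]\cup[\sqrt{m^2+k^2},\infty)$, it has no other gap eigenvalues by the computation above, and embedded eigenvalues are excluded because no $L^2$ solutions exist above threshold.
\item Taking the union over $k$ gives $\sigma(\DDet)=(-\infty,-m]\cup\{\zet\}\cup[m,\infty)$. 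Absolute continuity on $|z|\ge m$ follows from the explicit Krein-type resolvent formula in each fiber, whose kernel is analytic in $k$ with boundary values away from thresholds.
\end{enumerate}

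The main obstacle is this last point: proving that the part of the spectrum outside the point $\zet$ is purely absolutely continuous. My fallback is a limiting absorption argument in each fiber combined with the fact that the band functions $\pm\sqrt{m^2+k^2}$ are non-constant. Alternatively, I could simply invoke \cite[Thm.~6.2]{BHT23}, of which the statement is a restatement.
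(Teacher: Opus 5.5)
Your fiberwise plan is correct and is essentially the paper's route. The paper proves this proposition only by citing \cite[Thm.~6.2]{BHT23}, while its own computation in the proof of \cref{thm:mainline} solves the fiber ODEs on the two half-lines and reduces the $2\times2$ compatibility determinant to $\eta z+\tau m=0$, exactly as you predicted. Indeed, with $v_\pm$ normalized to have first entry $1$, the determinant is $\tfrac{i}{m+z}\big[\tfrac12(\eta^2-\tau^2-4)\kappa-2(\eta z+\tau m)\big]$. The absolute continuity away from $z_*$, which you flag as the obstacle, is likewise not proved in the paper but taken from \cite{BHT23}, so your fallback coincides with the paper; note also that once each fiber is purely a.c.\ away from $z_*$, the direct integral is automatically a.c.\ there, so no band-function argument is needed.
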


The eigenfunctions for a generic delta-shell interaction concentrated on a straight line were firstly considered in \cite{BHT23}. In the forthcoming \cref{thm:mainline}, we provide a more detailed characterization of the eigenspace associated to the isolated eigenvalue $\zet$ and, in the subsequent \cref{cor:eigline}, we examine some notable features of the corresponding eigenfunctions.

	\begin{theorem}\label{thm:mainline}
		Let $\Sigma = \Sigma_L$ be as in \eqref{eq:line} and let $\eta,\tau \in \R$ satisfy $\eta^2-\tau^2=4$. Then, the eigenspace associated to the infinitely degenerate eigenvalue $\zet$ consists of functions of the form
		\begin{equation}\label{eq:eignfunexp}
			\psili(x,y) = \tfrac{1}{2\sqrt{\pi}} \big[\theta(x)\,\Lambda_{\eta,\tau} + \theta(-x)\,\sigma_0 \big] \int_{\R}\!\! dk
				\begin{pmatrix}
					\sqrt{\left(1-\frac{\tau}{\eta}\right)\left(\sqrt{k^2 + \frac{4m^2}{\eta^2}} + k\right)} \\
					-i\,\sqrt{\left(1+\frac{\tau}{\eta}\right)\left(\sqrt{k^2 + \frac{4m^2}{\eta^2}} - k\right)} 
				\end{pmatrix}
			\Nh(k)\, e^{i k y -\sqrt{k^2 +\frac{4m^2}{\eta^2}}\,\vert x\vert}\,.
		\end{equation}
		where $\Lambda_{\eta,\tau}$ is the $2\times 2$ matrix defined in \eqref{eq:Lamdef} and $\Nh \in L^2(\R,dk)$ is a normalization factor fulfilling
		\begin{equation}\label{eq:normeq}
			\|\psili\|_{L^2(\R^2,\,dxdy;\,\CC^2)} = \|\Nh\|_{L^2(\R,\,dk)}\,.
		\end{equation}
	\end{theorem}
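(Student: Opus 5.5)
Using the fiber decomposition \eqref{eq:Hline}, I reduce the statement to a one-dimensional problem for each $k\in\R$. Since $\Fou_2\DDet\Fou_2^{-1}=\int^\oplus_\R \DDket\,dk$, a function $\psi$ solves $\DDet\psi=\zet\psi$ exactly when $\psih(\cdot,k)=(\Fou_2\psi)(\cdot,k)$ lies in $\ker(\DDket-\zet)$ for a.e.\ $k$. So I first find this kernel explicitly, then reassemble with an arbitrary $L^2$ profile in $k$, which will be $\Nh$.

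\textbf{Kernel of the fiber operator.} On each half-line, a solution of $(-i\sigma_1\partial_x+k\sigma_2+m\sigma_3-\zet)\hat\psi=0$ has the form $e^{\mu x}v$. Here $\mu^2=k^2+m^2-\zet^2$, and $v$ spans the kernel of the matrix $-i\mu\sigma_1+k\sigma_2+(m\sigma_3-\zet)$. Since $\zet^2=\tau^2m^2/\eta^2$ and $\eta^2-\tau^2=4$, we get
\[
m^2-\zet^2=\frac{4m^2}{\eta^2},
\qquad\text{so}\qquad
\mu=\pm\kappa(k),\quad \kappa(k):=\sqrt{k^2+\tfrac{4m^2}{\eta^2}}.
\]
Square integrability forces $e^{\kappa x}$ on $\R_-$ and $e^{-\kappa x}$ on $\R_+$. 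Each half-line therefore contributes a one-dimensional space, with vectors $v_\pm(k)$. I compute $v_-$ directly from the kernel of the $2\times2$ matrix with $\mu=\kappa$. The relation $m\mp\zet=m(1\pm\tau/\eta)$ produces the square-root entries $\sqrt{(1-\tau/\eta)(\kappa+k)}$ and $-i\sqrt{(1+\tau/\eta)(\kappa-k)}$. I then normalize $v_-$ so that $|v_-|^2=2\kappa$, which uses $(1-\tfrac{\tau}{\eta})(\kappa+k)+(1+\tfrac{\tau}{\eta})(\kappa-k)=2\kappa-2k\tau/\eta$; I need to check this sum against the normalization claimed below.

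\textbf{Transmission condition (main obstacle).} The condition \eqref{eq:bcLam} requires $\Lambda_{\eta,\tau}v_-$ to be a multiple of $v_+$. Two points must be verified: this is compatible for every $k$ precisely in the critical case, and $\Lambda_{\eta,\tau}$ exists. Existence is fine, since $\eta^2-\tau^2=4\neq-4$. For compatibility, I will show that $\Lambda_{\eta,\tau}$ intertwines the two first-order systems. Conjugating the interaction by $\Lambda_{\eta,\tau}$ maps $-i\sigma_1\kappa+\dots$ to $+i\sigma_1\kappa+\dots$ exactly when $\eta^2-\tau^2=4$; equivalently, $\Lambda_{\eta,\tau}$ anticommutes with $\sigma_1$ and commutes with $\sigma_2$ and $\sigma_3$ up to the $\zet$-shift. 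I would verify this by explicit $2\times2$ computation of $\Lambda_{\eta,\tau}=\frac{1}{\det}(\dots)$, where $\det(i\sigma_1-\tfrac12(\eta+\tau\sigma_3))=\tfrac14(\eta^2-\tau^2)+1=2$. Once this holds, $\ker(\DDket-\zet)$ is exactly one-dimensional, spanned by
\[
[\theta(x)\Lambda_{\eta,\tau}+\theta(-x)]\,v_-(k)\,e^{-\kappa|x|}.
\]
This is consistent with \cref{prop:line}.

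\textbf{Reassembly and norm.} Multiplying by an arbitrary $\Nh\in L^2(\R)$ and applying $\Fou_2^{-1}$ (the $1/\sqrt{2\pi}$ factor combines with the normalization constant) gives \eqref{eq:eignfunexp}. Conversely, every eigenfunction is of this form by the fiberwise argument above. For \eqref{eq:normeq}, Plancherel in $y$ reduces the claim to
\[
\int_\R \big|[\theta\Lambda+\theta(-\cdot)]v_-\big|^2 e^{-2\kappa|x|}\,dx=4\pi\cdot\tfrac{1}{2\pi}\cdot\big(\text{const}\big),
\]
that is, to $\big(|v_-|^2+|\Lambda_{\eta,\tau}v_-|^2\big)/(2\kappa)=2$. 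This requires computing $|\Lambda_{\eta,\tau}v_-|^2$, which I expect to equal $2\kappa+2k\tau/\eta$, so that the $k\tau/\eta$ terms cancel and the sum is $4\kappa$. Checking this cancellation is the delicate computational point.
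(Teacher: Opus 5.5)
Your overall route (fiberwise kernel at $z=\zet$, decaying exponentials on each half-line, matching through $\Lambda_{\eta,\tau}$, then $\Fou_2^{-1}$ and Plancherel) is the one the paper follows. Your formulas for $v_-$, for $\kappa$ and for $\det\big(i\sigma_1-\tfrac12(\eta\sigma_0+\tau\sigma_3)\big)=2$ are correct.

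The genuine problem is the mechanism you propose for the step you call the main obstacle: it is false as stated. By \eqref{eq:Lamcrit}, $\Lambda_{\eta,\tau}$ is off-diagonal, so it \emph{anti}commutes with $\sigma_3$. Moreover $\sigma_1\Lambda_{\eta,\tau}+\Lambda_{\eta,\tau}\sigma_1=-i\eta\,\sigma_0\neq0$ and $[\sigma_1,\Lambda_{\eta,\tau}]=-i\tau\,\sigma_3$. In fact no nonzero $2\times2$ matrix can anticommute with $\sigma_1$ while commuting with $\sigma_2$ and $\sigma_3$, because $\sigma_2\sigma_3=i\sigma_1$. Concretely, set $M_\pm:=\pm i\kappa\sigma_1+k\sigma_2+m\sigma_3-\zet\sigma_0$, so that $v_\pm$ spans $\ker M_\pm$. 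The diagonal part of $\Lambda_{\eta,\tau}M_-\Lambda_{\eta,\tau}^{-1}$ is $-m\sigma_3-\zet\sigma_0$. Hence this conjugate is never a scalar multiple of $M_+$ unless $\tau=0$, in which case it equals $-M_+$. An attempt to prove the conjugation identity for $\tau\neq0$ will therefore fail.

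The conclusion you want, $\Lambda_{\eta,\tau}v_-\in\ker M_+$ for every $k$, is nevertheless true, and there are two ways to get it.
\begin{enumerate}
\item The paper does it by direct computation. Imposing \eqref{eq:bcLam} on the explicit half-line vectors gives the $2\times2$ system \eqref{eq:eigenmat}. Its determinant vanishes iff $\eta z+\tau m=0$, a condition independent of $k$; this also shows that no other $z$ is an eigenvalue.
\item If you prefer a structural statement, the correct one is a twisted intertwining: $M_+\Lambda_{\eta,\tau}=N M_-$ with $N:=-\sigma_1\Lambda_{\eta,\tau}\sigma_1\neq\Lambda_{\eta,\tau}$. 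Checking entries, this uses only that $\Lambda_{\eta,\tau}$ is off-diagonal (criticality) and that $(m-\zet)(\eta-\tau)=(m+\zet)(\eta+\tau)$, i.e.\ $\eta\zet+\tau m=0$. It then gives $M_+\Lambda_{\eta,\tau}v_-=NM_-v_-=0$ at once.
\end{enumerate}

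For the norm, drop the sentence about normalizing to $|v_-|^2=2\kappa$: the vector in \eqref{eq:eignfunexp} has $|v_-|^2=2\kappa-2k\tau/\eta$. Your expected value $|\Lambda_{\eta,\tau}v_-|^2=2\kappa+2k\tau/\eta$ is right; it follows from $(\eta\mp\tau)^2(\eta\pm\tau)=4(\eta\mp\tau)$. So the sum is $4\kappa$. Since the fiber function is $\tfrac{1}{\sqrt2}\,[\theta(x)\Lambda_{\eta,\tau}+\theta(-x)\sigma_0]\,v_-\,\Nh\,e^{-\kappa|x|}$, this gives \eqref{eq:normeq}.
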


	\begin{remark}
		In the critical case under analysis, the definition \eqref{eq:Lamdef} reduces to
		\begin{equation}\label{eq:Lamcrit}
			\Lambda_{\eta,\tau} = \begin{pmatrix}
				0 & -\frac{i}{2} (\eta - \tau) \\
				-\frac{i}{2} (\eta + \tau) & 0
			\end{pmatrix}.
		\end{equation}
		It is noteworthy that $\Lambda_{\eta,\tau}^2 = - \one$, which means that $\Lambda_{\eta,\tau}$ yields a complex structure on $\CC^2$. 		This is an exceptional feature which never occurs for non-critical values of $\eta,\tau$.
	\end{remark}

	\begin{remark}
		The critical condition $\eta^2 - \tau^2 = 4$ entails both $|\eta| > 2$ and $|\tau/\eta| < 1$, which ensures that the arguments of all the square roots appearing in \eqref{eq:eignfunexp} are indeed positive definite. On the other hand, the identity \eqref{eq:normeq} makes evident that any $\psili$ of the form \eqref{eq:eignfunexp} is an eigenstate with unit norm in $L^2(\R^2;\CC^2)$ if and only if $\|\Nh\|_{L^2(\R,\,dk)} = 1$. 
	\end{remark}

	\begin{remark}
		As a direct consequence of \cref{thm:mainline}, the eigenspace associated to the eigenvalue $\zet$ is generated by functions $\psili$ of the form \eqref{eq:eignfunexp}, with the normalization factor $\Nh$ ranging in any Hilbert basis of $L^2(\R,dk)$. For example, one may choose $\Nh$ within the set of (normalized) energy eigenfunctions of the one dimensional harmonic oscillator, namely,
		\begin{equation*}
			\Nh \in \{b_n\}_{n \in \N}\,, \qquad b_n(k) := \tfrac{(2/\pi)^{1/4}}{\sqrt{2^n n!}}\,H_n\big(\sqrt{2}\, k\big)\,e^{-k^2} ,
		\end{equation*}
		where $H_n$ ($n \in \N$) are the Hermite  polynomials \cite[\S 18.3]{NIST}.
		To say more, the normalization factor $\Nh$ is allowed to incorporate a pure phase term of the form $e^{i k y_0}$. This corresponds to a translation by $y_0$ of the coordinate $y$, namely $y \mapsto y + y_0$, which matches in turn a shift of the bound state along the line $\Sigma$. For illustrative purposes, \cref{fig:modplots1} and \cref{fig:modplots2} display the plots of the squared modulus $|\psili(x,y)|^2$, for two different choices of the interaction parameters $\eta,\tau$ and of the form factor $\Nh$.
	\end{remark}
	\begin{figure}[t!]
    \begin{minipage}[b]{0.4\textwidth}
        \centering
        \includegraphics[width=\textwidth]{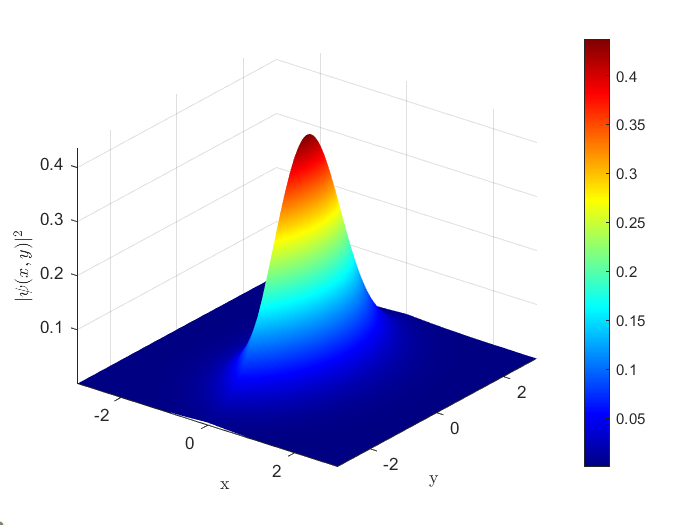}
    \end{minipage}
    \begin{minipage}[b]{0.4\textwidth}
        \centering
        \includegraphics[width=\textwidth]{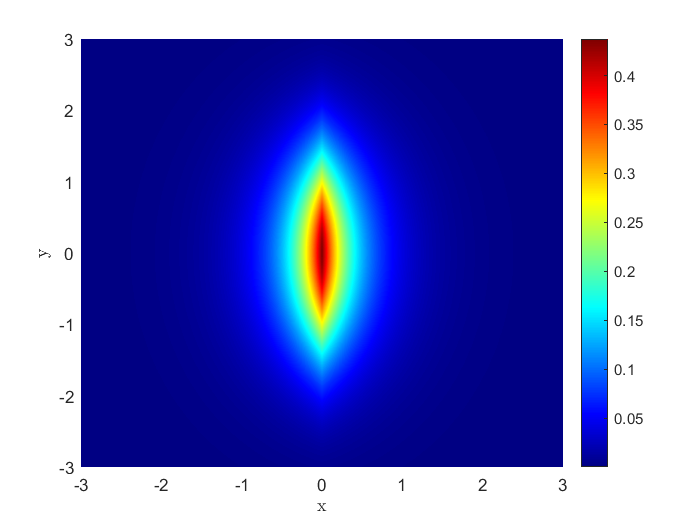}
    \end{minipage}
    \caption{
			Plot of $\vert \psili(x,y)\vert^2$ for $\eta=2$, $\tau=0$ and $\Xi(k) = b_0(k) = \big(\tfrac{2}{\pi}\big)^{1/4} e^{-k^2}$.
		}
    \label{fig:modplots1}
	\end{figure}
	\begin{figure}[t!]
      \begin{minipage}[b]{0.4\textwidth}
        \centering
        \includegraphics[width=\textwidth]{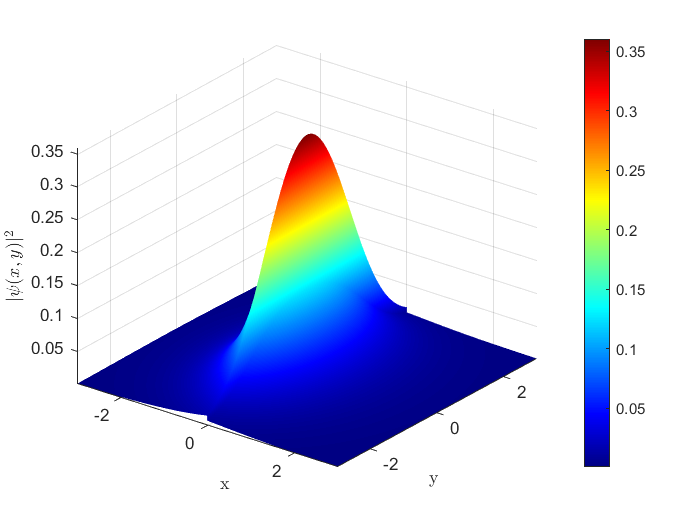}
    \end{minipage}
    \begin{minipage}[b]{0.4\textwidth}
        \centering
        \includegraphics[width=\textwidth]{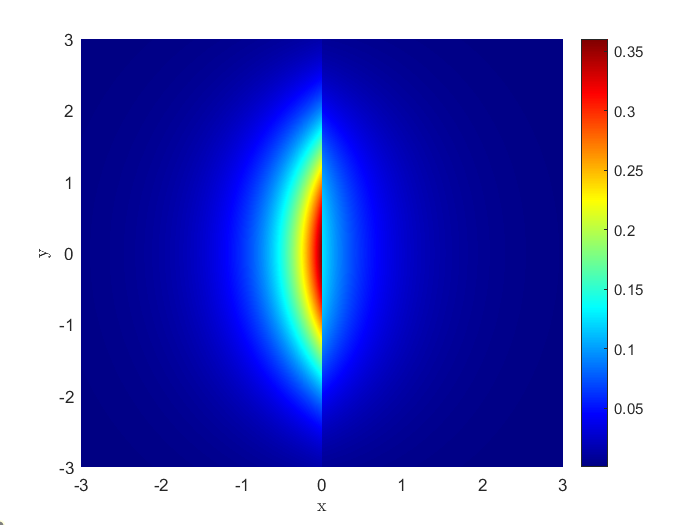}
    \end{minipage}
    \caption{
		Plot of $\vert \psili(x,y)\vert^2$ for $\eta=\sqrt{13}$, $\tau=-3$ and $\Xi(k)=\frac{2}{\sqrt 5}\!\left(\frac{2}{\pi}\right)^{1/4}(k+1)\,e^{-k^2}$.
		}
    \label{fig:modplots2}
	\end{figure}

\begin{remark}
The asymptotic behavior of the eigenstates for large values of the spatial coordinates can be obtained starting from the explicit expression \eqref{eq:eignfunexp} by standard stationary phase techniques.
In particular, if $\Nh$ belongs to the Schwartz class, then $\psili$ decays faster than any polynomial as $y \to \pm \infty$, for any fixed $x \in \R \setminus \{0\}$.
On the other side, the behavior of $\psili(x,y)$ for fixed $y \in \R$ and $x \to \pm \infty$, can be analyzed using the Laplace method, see \emph{e.g.} \cite[Thm. 8.1]{Olv}.
For example, assuming $\Nh$ to be continuous in a neighborhood of $k = 0$, we get
\begin{equation*}
		\psili(x,y) \sim  \big[ \theta(x)\, \Lambda_{\eta,\tau} + \theta(-x) \sigma_0  \big]  \begin{pmatrix}
		\sqrt{1-\frac{\tau}{\eta}} \vspace{0.1cm}\\ -i\,\sqrt{1+\frac{\tau}{\eta}} \end{pmatrix} \frac{\sqrt{2}\,m}{|\eta|}\,\Nh(0)\, \frac{e^{- \frac{2m}{|\eta|} |x|}}{|x|^{1/2}}\, , \qquad \mbox{for } \vert x \vert \to \infty\,.
\end{equation*}
where we employed the standard notation $f(x) \sim g(x)$ to indicate that $\lim_{|x| \to \infty} f(x)/g(x) = 1$. Higher order terms of the expansion could be obtained with stronger regularity hypotheses on $\Nh$.
\end{remark}

\begin{remark}
The dependence on the mass parameter is actually trivial. In fact, we can fix the length scale to be $m^{-1}$ and work in dimensionless units setting 
\[ 
\hat{\xv} := m\, \xv\,,\; \hat{\kv} := m^{-1}\kv \qquad \mbox{and} \qquad \hat{\Nh}(\hat{\kv}) = m^{3/2}\, \Nh(m \hat{\kv}) \,. 
\]
Accordingly, the eigenfunction $\psili(x,y)$ in \eqref{eq:eignfunexp} becomes
\begin{equation}\label{eq:eignfunexpdim}
		\hat{\psi}_{\Nh}(\hat{x},\hat{y}) = \frac{1}{2\sqrt{\pi}}\, \big[ \theta(\hat{x})\, \Lambda_{\eta,\tau} + \theta(-\hat{x}) \sigma_0 \big] \int_{\R}\! d\hat{k}  \begin{pmatrix} \sqrt{\left(1-\frac{\tau}{\eta}\right)\!\left(\sqrt{\hat{k}^2 + \frac{4}{\eta^2}} + \hat{k}\right)} \vspace{0.1cm}\\ -i\,\sqrt{\left(1+\frac{\tau}{\eta}\right)\!\left(\sqrt{\hat{k}^2 + \frac{4}{\eta^2}} - \hat{k}\right)}  \end{pmatrix} \hat{\Nh}(\hat{k})\,e^{i \hat{k} \hat{y} - \sqrt{\hat{k}^2 + \frac{4}{\eta^2}}\,\vert \hat{x} \vert}.
\end{equation}
\end{remark}
\medskip

We now proceed to compute the expectation values and standard deviations of three key observables for the eigenstates characterized in \cref{thm:mainline}, namely the spin $\sigma_3$, the position $\xv = (x,y)$, and the velocity $\vv = (v_x,v_y)$. While the first two observables are standard, let us recall that in the Dirac setting the velocity operator is defined as, see \cite{Th92},
\[
	\vv := i [\DDz,\xv]\,,
\]
and an elementary calculation of the commutator yields
\[
	v_x = \sigma_1\,, \qquad v_y = \sigma_2\,.
\]
Observe that $\sigma_3,v_x,v_y$ are bounded self-adjoint operators on $L^2(\R^2,\CC^2)$, whereas the position components are unbounded with natural self-adjointness domains
\[
	\dom(x) = \{\psi \in L^2(\R^2,\CC^2)\,|\, x\,\psi \in L^2(\R^2,\CC^2)\}\,, \qquad 
	\dom(y) = \{\psi \in L^2(\R^2,\CC^2)\,|\, y\,\psi \in L^2(\R^2,\CC^2)\}\,.
\]

For any observable $\mathcal{O}$ among those listed above and for any normalized state $\psi_{\eta,\tau}$, we set
\[
	\langle \mathcal{O} \rangle_{\Nh} := \langle \psili, \mathcal{O} \,\psili \rangle\,, \qquad
	\langle (\Delta\mathcal{O})^2 \rangle_{\Nh} := \|\mathcal{O}\psili\|^2 - \langle \mathcal{O} \rangle_{\Nh}^2\,.
\]

	\begin{corollary}\label{cor:eigline}
		Assume the hypotheses of \cref{thm:mainline} and let $\psili$ be of the form \eqref{eq:eignfunexp}, with $\Nh \in L^2(\R,dk)$. Then, $\psili \in \dom(x)$ and
		\begin{equation}
			\psili \in \dom(y) \qquad \mbox{if and only if} \qquad \Nh, \partial_k \Nh \in L^2(\R,dk)\,.
		\end{equation}
		Moreover, for any admissible $\Nh$ with $\|\Nh\| = 1$, the following identities hold true:
		\begin{equation}\label{eq:s3}
			\langle \sigma_3 \rangle_{\Nh} = -\tfrac{\tau}{\eta}\,, \qquad
			\langle (\Delta \sigma_3)^2 \rangle_{\Nh} = \tfrac{4}{\eta^2}\,;
		\end{equation}
		\begin{equation}\label{eq:xavg}
			\langle x \rangle_{\Nh} = \tfrac{\tau}{2\eta}\,\Big\langle \Nh,\tfrac{k}{k^2 + 4 m^2/\eta^2}\,\Nh\Big\rangle\,, \quad\;
			\langle (\Delta x)^2 \rangle_{\Nh} = \tfrac{1}{2}\Big\| \tfrac{1}{\sqrt{k^2 + 4 m^2/\eta^2}}\,\Nh \Big\|^2 - \tfrac{\tau^2}{4\eta^2}\! \left(\Big\langle \Nh,\tfrac{k}{k^2 + 4 m^2/\eta^2}\,\Nh\Big\rangle\right)^2 ;
		\end{equation}
		\begin{equation}
			\langle y \rangle_{\Nh} = \langle \Nh, i \partial_k \Nh \rangle\,, \qquad
			\langle (\Delta y)^2 \rangle_{\Nh} = \|i \partial_k \Nh\|^2 + \frac{1}{4} \left\|\tfrac{1}{\sqrt{k^2 + 4 m^2/\eta^2}}\,\Nh\right\|^2 - \big(\langle \Nh, i \partial_k \Nh \rangle\big)^2\,;
		\end{equation}
		\begin{equation}\label{eq:vxvy}
			\langle v_x \rangle_{\Nh} = \langle v_y \rangle_{\Nh} = 0\,, \qquad
			\langle (\Delta v_x)^2 \rangle_{\Nh} = \langle (\Delta v_y)^2 \rangle_{\Nh} = 1\,.
		\end{equation}
	\end{corollary}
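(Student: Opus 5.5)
The plan is to work entirely in the partial Fourier representation given by $\Fou_2$, where \cref{thm:mainline} says the eigenfunction reads
\[
(\Fou_2\psili)(x,k)=\tfrac{1}{\sqrt2}\,\big[\theta(x)\Lambda_{\eta,\tau}+\theta(-x)\sigma_0\big]\,v(k)\,\Nh(k)\,e^{-\kappa|x|}.
\]
Here $\kappa=\kappa(k):=\sqrt{k^2+4m^2/\eta^2}$, $v(k)=(a,-ib)^T$ with $a=\sqrt{(1-t)(\kappa+k)}$, $b=\sqrt{(1+t)(\kappa-k)}$, and $t:=\tau/\eta$. The observables $\sigma_3,\sigma_1,\sigma_2$ and $x$ act fiberwise, so their moments reduce to one-dimensional integrals in $x$ for each $k$, followed by integration against $|\Nh(k)|^2$. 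The observable $y$ becomes $i\partial_k$.

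\textbf{Preliminary algebra.} Using the explicit form \eqref{eq:Lamcrit} of $\Lambda_{\eta,\tau}$ and the constraint $(\eta-\tau)(\eta+\tau)=4$, I will compute
\[
\Lambda_{\eta,\tau}v=\big(-\tfrac{\eta-\tau}{2}b,\,-\tfrac{i(\eta+\tau)}{2}a\big)^T .
\]
From this one gets $|v|^2=2\kappa-2tk$ and $|\Lambda_{\eta,\tau}v|^2=2\kappa+2tk$, together with the spin components
\[
\langle v,\sigma_3v\rangle=2k-2t\kappa,\qquad \langle\Lambda_{\eta,\tau}v,\sigma_3\Lambda_{\eta,\tau}v\rangle=-2k-2t\kappa .
\]
Combining these with the half-line integrals $\int_0^\infty x^je^{-2\kappa x}dx=j!/(2\kappa)^{j+1}$ will give each quantity:

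\begin{enumerate}[(i)]
\item The norm per fiber is $1$, which recovers \eqref{eq:normeq}.
\item $\langle\sigma_3\rangle_\Nh=-t$. Since $\sigma_3^2=\sigma_0$, the variance is $1-t^2=4/\eta^2$.
\item $\langle x\rangle_\Nh=\langle\Nh,\,tk/(2\kappa^2)\,\Nh\rangle$, obtained from $\pm1/(4\kappa^2)$ weighted by $|\Lambda_{\eta,\tau}v|^2-|v|^2=4tk$.
\item $\|x\psili\|^2=\|\Nh/(\sqrt2\kappa)\|^2$, which gives \eqref{eq:xavg}.
\end{enumerate}

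Because $\kappa\ge 2m/|\eta|>0$, all the $k$-weights involved are bounded. This immediately shows $\psili\in\dom(x)$ for every $\Nh\in L^2$.

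\textbf{Velocity.} The first component of $v$ is real and the second is purely imaginary, so $\bar u_1u_2$ is purely imaginary for $u=v$. The same holds for $u=\Lambda_{\eta,\tau}v$, whose components are real and imaginary respectively. Hence the pointwise densities $2\,\mathrm{Re}(\bar u_1u_2)$ and $2\,\mathrm{Im}(\bar u_1u_2)$ must be checked separately:
\begin{itemize}
\item The $\sigma_1$-density $2\,\mathrm{Re}(\bar u_1u_2)$ vanishes identically.
\item The $\sigma_2$-density $2\,\mathrm{Im}(\bar u_1u_2)$ has opposite signs on the two half-planes. After integration in $x$ the two contributions are $-ab\kappa^{-1}$ and $+\tfrac{(\eta-\tau)(\eta+\tau)}{4}ab\,\kappa^{-1}=ab\,\kappa^{-1}$, so they cancel.
\end{itemize}
This gives $\langle v_x\rangle_\Nh=\langle v_y\rangle_\Nh=0$. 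Since $\sigma_j^2=\sigma_0$, both variances equal $1$, proving \eqref{eq:vxvy}.

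\textbf{Main obstacle: the $y$ moments and $\dom(y)$.} Write $\Fou_2\psili=\Nh(k)\,g(x,k)$, where $g$ has unit $L^2(dx)$-norm for each $k$. Then
\[
\partial_k(\Nh g)=(\partial_k\Nh)\,g+\Nh\,\partial_kg .
\]
The plan has three steps.
\begin{enumerate}[(a)]
\item Show that $\|\partial_kg(\cdot,k)\|_{L^2(dx)}$ is bounded uniformly in $k$, using $\kappa\ge 2m/|\eta|$ and explicit derivatives of $a$, $b$ and $e^{-\kappa|x|}$. This yields the equivalence $\psili\in\dom(y)\iff\partial_k\Nh\in L^2$.
\item Note that $g$ is real up to constant phases in each component, and $\mathrm{Re}\langle g,\partial_kg\rangle=\tfrac12\partial_k\|g\|^2=0$. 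Hence the cross term between $(\partial_k\Nh)g$ and $\Nh\,\partial_kg$ drops out of both $\langle y\rangle_\Nh$ and $\|y\psili\|^2$. This gives $\langle y\rangle_\Nh=\langle\Nh,i\partial_k\Nh\rangle$.
\item Compute $\|\partial_kg\|^2_{L^2(dx)}=1/(4\kappa^2)$ explicitly. I expect this to be the delicate calculation: several terms from $\partial_k\sqrt{\kappa\pm k}$ and from $-|x|\partial_k\kappa\, e^{-\kappa|x|}$ must combine, again via $(\eta-\tau)(\eta+\tau)=4$. The result produces the term $\tfrac14\|\Nh/\kappa\|^2$ in $\langle(\Delta y)^2\rangle_\Nh$.
\end{enumerate}
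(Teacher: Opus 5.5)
Your proposal is correct and follows the paper's route. The paper only invokes Plancherel's theorem for $\Fou_2$ and the identity $y=\Fou_2^{-1}(i\partial_k)\Fou_2$, and omits the calculations; you carry them out, and the fiberwise norm, spin, position and velocity moments all agree with the stated formulas. Your step (c) also goes through as you expect: since $\partial_k\kappa=k/\kappa$ one gets $(\partial_k a)^2=a^2/(4\kappa^2)$ and $(\partial_k b)^2=b^2/(4\kappa^2)$, so the $\partial_k\kappa$ terms cancel and $\|\partial_k g(\cdot,k)\|^2_{L^2(dx)}=1/(4\kappa^2)\le \eta^2/(16m^2)$, which yields both the $\tfrac14\|\Nh/\kappa\|^2$ term and the uniform bound needed in step (a).
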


	\begin{remark}
		The first relation in \eqref{eq:s3} shows that eigenstate $\psili$ has zero average spin if and only if $\tau = 0$, while the second identity in the same equation makes evident that the spin uncertainty can be made arbitrarily small by taking $\eta$ sufficiently large.
On the other hand, let us notice that the first relation in \eqref{eq:xavg} can be written explicitly as
		\[
			\langle x \rangle_{\Nh} = \frac{\tau}{2\eta} \int_{\R} dk\; \frac{k}{k^2 + 4 m^2/\eta^2}\,|\Nh(k)|^2\,.
		\]
		This makes evident that the eigenfunction $\psili$ is centered on the line $\Sigma$ only if either $\tau = 0$ (whence, $\eta = \pm 2$) or $|\Nh(k)|$ is an even function of $k \in \R$. In this connection, we note that the plots in \cref{fig:modplots1} correspond to a case where $\langle x \rangle_{\Nh} = 0$, whereas those in \cref{fig:modplots2} illustrate a case in which $\langle x \rangle_{\Nh} \neq 0$.
		Finally, Eq. \eqref{eq:vxvy} implies that the eigenfunction $\psili$ has zero average velocity, as expected for a bound state.
	\end{remark}

\subsection{Singular interactions on a circle}\label{sec:Dcircle}
Let us now consider the case where $\Sigma$ is a circle of radius $R>0$, that we take centered at the origin:
\begin{equation}\label{eq:circle}
\Sigma = \Sigma_R:= \big\{(x,y)\in\R^2\,\big|\, x^2+y^2=R^2\big\}\,,
\end{equation}
In agreement with the established convention, we assume $\Omega_{+}$ to be the disk enclosed by $\Sigma_R$ and $\Omega_{-}$ to be its complement, namely,
\[
	\Omega_\pm := \{(x,y) \in \R^2 \,|\, \pm (x^2 + y^2 - R^2) < 0\}\,.
\]

Of course, in this context it is convenient to pass to polar coordinates. We set
\[
\begin{cases}
r=\sqrt{x^2+y^2}\,, \\
\theta = \arctan (y/x)\,,
\end{cases}
\qquad \iff \qquad
\begin{cases}
x=r\cos\theta\,, \\
y=r\sin\theta\,.
\end{cases}
\]
Considering that
\[
\partial_x= \cos\theta\, \partial_r - \sin\theta\,\tfrac{1}{r}\,\partial_\theta\,, \qquad 
\partial_y= \sin\theta \,\partial_r + \cos\theta\,\tfrac{1}{r}\,\partial_\theta\,,
\]
and recalling \eqref{eq:paulim}, we obtain
\[
\siv \cdot \nabla = \sigma_r \partial_r + \sigma_\theta \,\tfrac{1}{r}\,\partial_\theta\,,
\]
where
\[
\sigma_r:=\begin{pmatrix}0 & e^{-i\theta} \\ e^{i\theta} & 0 \end{pmatrix} ,\qquad
\sigma_\theta:=\begin{pmatrix}0 & -i e^{-i\theta} \\ i e^{i\theta} & 0 \end{pmatrix} .
\]
Combining the above identities, we can rewrite the differential expression of the free Dirac operator in polar coordinates as 
\begin{equation}\label{eq:Dpolar}
-i \siv \cdot \nabla + m\,\sigma_3 = \begin{pmatrix} m & e^{-i\theta} \left(-i\partial_r-\frac{1}{r}\,\partial_\theta \right) \\  e^{i\theta} \left(-i \partial_r + \frac{1}{r}\,\partial_\theta \right) & -m \end{pmatrix} .
\end{equation}

Next, taking into account that the Hamiltonian operator under analysis commutes with the total angular momentum operator
\begin{equation}\label{eq:L3}
 J_3 
 = \big[ x(-i \partial_y) - y(-i\partial_x) \big] \sigma_0 + \tfrac{1}{2}\, \sigma_3
 = -i\, \sigma_0\, \partial_\theta + \tfrac{1}{2}\, \sigma_3\,,
\end{equation}
we proceed to write a generic $L^2$-spinor on $\R^2$ as
\begin{equation}\label{eq:polarspinor}
\psi(r,\theta) = \tfrac{1}{\sqrt{2\pi}}\sum_{k\in\mathbb Z}\begin{pmatrix} u_k(r)\, e^{i k \theta} \\ v_k(r)\, e^{i(k+1)\theta} \end{pmatrix} .
\end{equation}
Here and in the sequel, we are referring to the Hilbert-space isomorphism
\[
L^2(\R^2\,;\, \CC^2)\simeq \bigoplus_{k\in\Z} \left[ L^2(\R_+,r\,dr)\otimes h_k(\mathbb S^1)\right] ,
\]
where, for any $k \in \Z$, $h_k(\mathbb S^1) = \{c\, (e^{ik\theta}, e^{i(k+1)\theta})\,|\, c\in\CC\}$ is the one-dimensional subspace of $L^2(\mathbb S^1) \otimes \CC^2$ spanned by the eigenfunction of $J_3$ with eigenvalue $k + 1/2$.

Considering the unit normal vector $\nv = (\cos\theta,\sin\theta)$ pointing to the exterior of the disk $\Omega_+$, we further obtain
\[
\nv \cdot \siv  = \sigma_r\, .
\]
Referring to the trivial identification
\[
	L^2(\R_+,rdr) \simeq L^2((0,R),rdr) \oplus L^2((R,+\infty),rdr)\,, \qquad u \simeq u^{+} \oplus u^{-} ,
\]
the boundary condition in \eqref{eq:Ddom} reduces to
\begin{equation}\label{eq:bcradial}
\begin{cases}
-i\,[v_k]_R = (\eta+\tau)\, \langle u_k \rangle_R \,, \\
-i\,[u_k]_R = (\eta-\tau)\, \langle v_k \rangle_R \,,
\end{cases} \qquad
\mbox{for all } k \in \Z\,,
\end{equation}
where we set
\[
[u_k]_R := u_{k}^{+}(R^-) - u_{k}^{-}(R^+)\,, \qquad
\langle u_k \rangle_R := \tfrac{1}{2} \big[ u_{k}^{+}(R^-) + u_{k}^{-}(R^+) \big]\,.
\]
Combining the above observations, it is not hard to see that in the case under analysis the Hamiltonian \eqref{eq:Ddom}, \eqref{eq:Daction} can be decomposed as
\begin{equation}\label{eq:csum}
\DDet \sim \bigoplus_{k\in\mathbb Z}\, \DDet[k]\,,
\end{equation}
where the fiber operators are defined as follows:
\begin{eqnarray}
& \hspace{-2.7cm} \dom\big(\DDet[k]\big):=\Big\{\psih = \!\begin{pmatrix} u_{k}^{+} \vspace{0.1cm}\\ v_{k}^{+} \end{pmatrix} \oplus \begin{pmatrix} u_{k}^{-} \vspace{0.1cm}\\ v_{k}^{-} \end{pmatrix} \in L^2((0,R),rdr;\CC^2)\oplus L^2\big((R,+\infty),rdr;\CC^2\big)\quad \mbox{s.t.} \nonumber \\
& \hspace{0.cm}  (\partial_r - \tfrac{k}{r})u_{k}^{+},(\partial_r + \tfrac{k+1}{r})v_{k}^{+} \in L^2\big((0,R),rdr\big)\,,\; (\partial_r - \tfrac{k}{r})u_{k}^{-},(\partial_r + \tfrac{k+1}{r})v_{k}^{-} \in L^2((R,+\infty),rdr)\,, \nonumber \\
&  \hspace{12.cm} \mbox{and\, \eqref{eq:bcradial} holds}\,\Big\} ; \\
& \DDet[k]= \begin{pmatrix} m &  -i \left(\partial_r+\frac{k+1}{r} \right) \\ -i\left(\partial_r - \frac{k}{r}\right) & -m \end{pmatrix} . \label{eq:polarfiber}
\end{eqnarray}
\smallskip

We already mentioned that general results on the full Hamiltonian operator $\DDet$ defined in \eqref{eq:Ddom}, \eqref{eq:Daction} can be found in \cite{BHOP20}, for the case of a simple and smooth closed curve $\Sigma\subseteq \R^2$, with arbitrary parameters $\eta,\tau\in\R$. Here we limit ourselves to recall the following result, which is of interest for our purposes \cite[Thm. 1.2]{BHOP20}.

	\begin{proposition}
		Let $\Sigma$ be a smooth simple closed curve in the plane and $\eta,\tau \in \R$, with $\eta^2-\tau^2=4$. Then the Hamiltonian operator $\DDet$ defined in \eqref{eq:Ddom}, \eqref{eq:Daction} fulfills
		\begin{equation}
			\sigma_{\mathrm{ess}}(\DDet)=(-\infty,m]\cup \Big\{-\frac{\tau}{\eta}\,m\Big\}\cup[m,+\infty)\,.		
		\end{equation}
		Accordingly, 
		\begin{equation}
			\sigma_{\mathrm{disc}}(\DDet)\subseteq (-m,m)\,\setminus\, \Big\{-\frac{\tau}{\eta}\,m\Big\}\, .
		\end{equation}
	\end{proposition}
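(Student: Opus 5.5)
The approach is a boundary integral (Birman--Schwinger/Krein) reduction. For $z\in(-m,m)$ it turns the question into the Fredholm property of a $z$-dependent pseudodifferential operator on $\Sigma$. We then read off the essential spectrum from its principal and subprincipal symbols.

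The first step treats the two half-lines. Since $\Sigma$ is compact, $\DDet$ and $\DDz$ are both self-adjoint extensions of the same symmetric operator: the restriction of $-i\siv\cdot\nabla+m\sigma_3$ to $H^1_0(\R^2\setminus\Sigma;\CC^2)$. This restriction has infinite deficiency indices, so we cannot conclude from a finite-rank resolvent difference. Instead we use singular Weyl sequences. Take Weyl sequences for $\DDz$ at any $\lambda$ with $|\lambda|\geq m$, built from plane waves times cutoffs, and translate them to infinity so that they are supported away from $\Sigma$. They then lie in $\dom(\DDet)$ and remain Weyl sequences. This gives $(-\infty,-m]\cup[m,+\infty)\subseteq\sigma_{\mathrm{ess}}(\DDet)$. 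The reverse inclusion is not needed outside the gap, since it is all of the complement of $(-m,m)$.

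The second step handles the gap. For $z\in\rho(\DDz)=\CC\setminus\big((-\infty,-m]\cup[m,\infty)\big)$, write the resolvent via a Krein-type formula:
\[
(\DDet-z)^{-1}=(\DDz-z)^{-1}+\Phi_z\,\big(\sigma_0+(\eta\sigma_0+\tau\sigma_3)\,\Con_z\big)^{-1}(\eta\sigma_0+\tau\sigma_3)\,\Phi_{\bar z}^*\,.
\]
Here $\Phi_z$ is the single-layer potential for $\DDz-z$, and $\Con_z$ is the Cauchy-type boundary operator obtained as the principal value of $\Phi_z$ on $\Sigma$; sign and ordering conventions would be fixed so that the formula reproduces the boundary condition \eqref{eq:Ddom}. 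Consequently, $z\in(-m,m)$ lies in $\sigma_{\mathrm{ess}}(\DDet)$ if and only if $0\in\sigma_{\mathrm{ess}}$ of $B_z:=\sigma_0+(\eta\sigma_0+\tau\sigma_3)\Con_z$ acting on $H^{1/2}(\Sigma;\CC^2)$, where $B_z$ is Fredholm whenever this point is excluded. The key input is that $\Con_z$ is a classical pseudodifferential operator of order $0$ on the closed curve $\Sigma$. Its principal symbol is $\tfrac{1}{2}(\tv\cdot\siv)\,\sgn(\xi)$, up to a factor $\pm i$, and this symbol is independent of $z$. The resulting principal symbol of $B_z$ has determinant proportional to $\tfrac14(4-\eta^2+\tau^2)$. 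It is therefore elliptic exactly in the non-critical case, in which case $B_z$ is Fredholm of index $0$ for every $z$ in the gap. Analytic Fredholm theory in $z$ then gives only discrete eigenvalues, recovering the known non-critical picture.

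The main obstacle is the critical case $\eta^2-\tau^2=4$, where the principal symbol of $B_z$ is a nonzero nilpotent-type matrix rather than invertible. The plan is to compute the next term of the symbol expansion of $\Con_z$, of order $-1$, using the explicit kernel of $(\DDz-z)^{-1}$, which is built from $K_0$ and $K_1$ Bessel functions. We then perform a block reduction: multiply $B_z$ by a suitable elliptic order-zero pseudodifferential factor (essentially $\sigma_0-(\eta\sigma_0+\tau\sigma_3)\Con_0$ composed with projections onto the symbol's kernel and range). This reduces invertibility modulo compacts to that of a scalar operator of the form $c\,(\eta z+\tau m)\,|D|^{-1}+(\text{order }{-1}\text{ terms independent of } z\text{, or of lower order})$ acting between appropriate Sobolev spaces. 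If the $z$-independent order $-1$ contributions cancel, which is what the straight-line computation of \cref{prop:line} suggests, the reduced operator is elliptic of order $-1$ precisely when $z\neq\zet=-\tfrac{\tau}{\eta}m$. For such $z$ this yields the Fredholm property of $B_z$, and hence that these points of the gap are not in $\sigma_{\mathrm{ess}}(\DDet)$. At $z=\zet$ the reduced operator has vanishing leading symbol, so it is of order $-2$ and therefore compact as a map between the relevant spaces; $B_{\zet}$ then fails to be Fredholm, giving $\zet\in\sigma_{\mathrm{ess}}$.

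As an independent check of $\zet\in\sigma_{\mathrm{ess}}$, one can build Weyl sequences directly. Take the fibered eigenfunctions of \cref{thm:mainline} with fiber momentum $k=n$, transplant them to tubular coordinates around $\Sigma$, and use a Weyl-sequence argument with an $n$-dependent correction to absorb the curvature term. Verifying that the $O(1)$ curvature contribution can be removed at first order is exactly the same symbol cancellation as above, which confirms that this computation is the heart of the proof. Finally, the inclusion $\sigma_{\mathrm{disc}}(\DDet)\subseteq(-m,m)\setminus\{\zet\}$ follows immediately from the description of $\sigma_{\mathrm{ess}}$.
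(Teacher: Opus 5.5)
The paper does not prove this proposition; it quotes it from \cite[Thm.~1.2]{BHOP20}. Your route is the natural one and in the spirit of that reference: a Krein/Birman--Schwinger reduction to the Cauchy-type operator $C_z$ on $\Sigma$, followed by periodic pseudodifferential calculus. Your first step is fine, namely Weyl sequences supported far from $\Sigma$ for $|\lambda|\geq m$.

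\textbf{The gap: the functional-analytic setting in the critical case.} You assert that $z\in(-m,m)$ lies in $\sigma_{\mathrm{ess}}(\DDet)$ iff $B_z=\sigma_0+(\eta\sigma_0+\tau\sigma_3)C_z$ is not Fredholm on $H^{1/2}(\Sigma;\CC^2)$. For $\eta^2-\tau^2=4$ this is false.
\begin{itemize}
\item Set $A(s,\xi):=\tfrac{\sgn\xi}{2}(\eta\sigma_0+\tau\sigma_3)(\tv(s)\cdot\siv)$. Then $A^2=\tfrac{\eta^2-\tau^2}{4}\sigma_0=\sigma_0$. So the principal symbol $\sigma_0+A=2P_+$ is twice a rank-one (oblique) projection, not a nilpotent matrix.
\item On the range of $P_-=\sigma_0-P_+$, the operator $B_z$ is microlocally of order $-1$. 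Hence, for \emph{every} $z$ in the gap, $H^{1/2}$-normalized high-frequency sequences microlocalized in $\operatorname{ran}P_-$ satisfy $\varphi_n\rightharpoonup 0$ and $\|B_z\varphi_n\|_{H^{1/2}}\to 0$.
\item So $B_z$ is never Fredholm on $H^{1/2}$, and your equivalence would put all of $(-m,m)$ into $\sigma_{\mathrm{ess}}(\DDet)$.
\end{itemize}
This reflects the fact that in the critical case $\dom(\DDet)\not\subset H^1(\Omega_+)\oplus H^1(\Omega_-)$. Writing $\psi=(\DDz-z)^{-1}f+\Phi_z\varphi$, the density $\varphi$ is only in $H^{-1/2}$ along the $P_-$ direction.

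What is missing is the following.
\begin{itemize}
\item A Krein formula for the operator defined through the $H(\siv,\Omega_\pm)$-traces in \eqref{eq:Ddom}. In it, $B_z$ must act from the $z$-independent mixed space $\mathcal X=\{\varphi:\ \mathrm{Op}(P_+)\varphi\in H^{1/2},\ \mathrm{Op}(P_-)\varphi\in H^{-1/2}\}$ into $H^{1/2}(\Sigma;\CC^2)$. This also requires the mapping properties of $\Phi_z$ on $H^{-1/2}$.
\item A proof, in both directions, that $z\notin\sigma_{\mathrm{ess}}(\DDet)$ iff $B_z:\mathcal X\to H^{1/2}$ is Fredholm. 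For $z=\zet$ you must turn a singular sequence of $B_{\zet}$ into a singular Weyl sequence $\Phi_{\zet}\varphi_n$ for $\DDet$.
\end{itemize}
Your phrase ``appropriate Sobolev spaces'' is exactly where this work lies. Without it, the symbol computation does not yield the statement.

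\textbf{The symbolic step you left conditional does close.}
\begin{itemize}
\item The $K_1$-part of the kernel of $C_z$ equals $\tfrac12(\tv\cdot\siv)\sgn(D)$ modulo order $-2$. On a smooth curve the Cauchy kernel differs from $\tfrac{\tv\cdot\siv}{s-t}$ by a smooth kernel, and $\kappa K_1(\kappa r)-1/r=O(r\log r)$ with $\kappa=\sqrt{m^2-z^2}$.
\item The prefactor is a real sign, not $\pm i$. With an extra $i$, the degeneracy would sit at $\eta^2-\tau^2=-4$ instead.
\item The $K_0$-part contributes $\tfrac{1}{2|\xi|}(z\sigma_0+m\sigma_3)$. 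Compositions create no order $-1$ terms, because $\partial_\xi\sgn\xi=0$ for $\xi\neq 0$. So curvature does not enter at this order.
\item Since $\sigma_3A=-A\sigma_3$, you get $P_-\sigma_3P_-=0$. Therefore the compression of $\tfrac{1}{2|\xi|}(\eta\sigma_0+\tau\sigma_3)(z\sigma_0+m\sigma_3)=\tfrac{1}{2|\xi|}\big[(\eta z+\tau m)\sigma_0+(\eta m+\tau z)\sigma_3\big]$ to $\operatorname{ran}P_-$ is exactly $\tfrac{\eta z+\tau m}{2|\xi|}P_-$.
\end{itemize}
The cancellation you hoped for is thus automatic. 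Compare the $(\eta z+\tau m)/|k|$ decay of the left-hand side of \eqref{eq:evcond} for the circle.

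\textbf{The Weyl-sequence ``check'' is not yet independent.} The transplanted line eigenfunctions are localized within $O(1/n)$ of $\Sigma$ and oscillate at frequency $n$. They leave an $O(1)$ residual, coming from the $\kappa u\,\partial_s$ term and from the rotation of the local frame. An explicit corrector would still have to be constructed.
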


We are now in the position to state our main result.

	\begin{theorem}\label{thm:maincirc}
		Let $\Sigma=\Sigma_R$ for a fixed $R>0$, and let $\eta,\tau\in\R$ satisfy $\eta^2-\tau^2=4$. Then, the following statements hold.\\
		i) The isolated spectral point
			\begin{equation}
				\zet=-\frac{\tau}{\eta}\, m\in\sigma_{\mathrm{ess}}(\DDet)\,,
			\end{equation}
			and the threshold values $z_{\pm} = \pm m\in\sigma_{\mathrm{ess}}(\DDet)$ are \emph{not} eigenvalues of $\DDet$. 
		\\
		ii) The discrete spectrum of $\DDet$ contains a doubly infinite sequence of eigenvalues
			\begin{equation}\label{eq:zkeig}
				\{z_k\}_{k \in \Z} \subset (-m,m)\,,
			\end{equation}
			which are implicitly characterized as the solutions of the equation
			\begin{equation}\label{eq:evid}
				\frac{(\eta-\tau)(m- z)\,I_{k+1}(\sqrt{m^2-z^2}\,R)\,K_{k+1}(\sqrt{m^2-z^2}\,R)}{(\eta+\tau)(m+ z)\,I_{k}(\sqrt{m^2-z^2}\,R)\,K_{k}(\sqrt{m^2-z^2}\,R)}=1\,,
			\end{equation}
			where $I_\nu,K_\nu$ are the modified Bessel functions of the first and second kind, respectively.\\
			These eigenvalues $\{z_k\}_{k \in \Z}$ accumulate at $\zet$, that is,
			\begin{equation}\label{eq:zacc}
				\lim_{k\to\pm\infty} z_k= \zet\,.
			\end{equation}
		iii) The eigenfunctions associated to the eigenvalues $\{z_k\}_{k \in \Z}$ are given by
			\begin{equation}\label{eq:formk}
				\psi_k(r,\theta)= \tfrac{1}{\sqrt{2\pi}} \begin{pmatrix} u_{k}(r)\, e^{ik\theta} \\ v_{k}(r)\, e^{i(k+1)\theta}\end{pmatrix} ,
			\end{equation}
			in polar coordinates, with
			\begin{equation}\label{eq:uvIK}
				\begin{cases}
					u_k(r) = a_k \left[\one_{(0,R)}(r)\, I_k(t) + \one_{(R,+\infty)}(r)\, c_{k}\, K_k(t)\right]_{t\,=\,\sqrt{m^2-z_k^2}\,r}\, ,\vspace{0.1cm} \\
					v_k(r) = -i\,a_k\,\sqrt{\frac{m-z_k}{m+z_k}} \left[\one_{(0,R)}(r)\, \, I_{k+1}(t) - \one_{(R,+\infty)}(r)\, c_{k}\, K_{k+1}(t) \right]_{t\,=\,\sqrt{m^2-z_k^2}\,r} \,,
				\end{cases}
			\end{equation}
			where
			\begin{equation}\label{eq:const}
				c_{k} := \left[\frac{(\eta-\tau)\sqrt{m-z_k}\,I_{k+1}(t_k) - 2\sqrt{m+z_k}\, I_k(t_k)}{(\eta-\tau)\sqrt{m-z_k}\,K_{k+1}(t_k)-2\sqrt{m+z_k}\,K_k(t_k)}\right]_{t_k\,=\,\sqrt{m^2-z_k^2}\,R} ,
			\end{equation}
			and $a_k\in \CC$ is a normalization constant. In particular, $\|\psi_k\| = 1$ if
			\begin{align}\label{eq:akunit}
				a_k
					& = \frac{\sqrt{2}}{R}\, \bigg[
I_k^2(t_k) - I_{k-1}(t_k)I_{k+1}(t_k)
+ |c_{k}|^2 \Big(K_{k-1}(t_k) K_{k+1}(t_k) - K^2_k(t_k) \Big)
				 \\
				& \hspace{0.5cm} + \frac{m-z_k}{m+z_k} \left[ I_{k+1}^2(t_k) - I_{k}(t_k R)I_{k+2}(t_k)
+ |c_{k}|^2 \Big(K_{k}(t_k)K_{k+2}(t_k) - K_{k+1}^2(t_k)\Big)\right]
 \bigg]_{t_k \,=\,\sqrt{m^2-z_k^2}\,R}^{-1/2} . \nonumber
			\end{align}
	\end{theorem}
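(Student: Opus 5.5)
We use the decomposition \eqref{eq:csum} and reduce every claim to an explicit ODE problem on each angular-momentum fiber $\DDet[k]$. Since $\DDet$ is the orthogonal sum of the $\DDet[k]$, a number $z$ is an eigenvalue of $\DDet$ if and only if it is an eigenvalue of some $\DDet[k]$. So the plan is first to solve $\DDet[k]\psih=z\psih$ explicitly on $(0,R)$ and on $(R,\infty)$, and then to impose the transmission conditions \eqref{eq:bcradial}.

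\textbf{Fiber solutions for $z\in(-m,m)$.} Set $\kappa=\sqrt{m^2-z^2}>0$. From \eqref{eq:polarfiber} we get
\[
(z-m)u=-i(\partial_r+\tfrac{k+1}{r})v,\qquad (z+m)v=-i(\partial_r-\tfrac{k}{r})u .
\]
Eliminating $v$ gives the modified Bessel equation of order $k$ in $t=\kappa r$ for $u$.

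\begin{itemize}
\item Square integrability near $0$ selects $u^+=aI_k$. The recurrence $I_k'-\tfrac{k}{t}I_k=I_{k+1}$ then gives $v^+=-ia\sqrt{\tfrac{m-z}{m+z}}\,I_{k+1}$.
\item Decay at infinity selects $u^-=bK_k$. The recurrence $K_k'-\tfrac{k}{t}K_k=-K_{k+1}$ gives $v^-=+ib\sqrt{\tfrac{m-z}{m+z}}\,K_{k+1}$.
\end{itemize}

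This is exactly the ansatz \eqref{eq:uvIK}, with $b=ac_k$.

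Next we insert these into \eqref{eq:bcradial}. This yields a homogeneous $2\times 2$ linear system in $(a,b)$. A nontrivial solution exists exactly when its determinant vanishes. We simplify the determinant using $\eta^2-\tau^2=4$ and the Wronskian $I_kK_{k+1}+I_{k+1}K_k=1/t$. The expected outcome is that the condition collapses to
\[
(\eta-\tau)(m-z)\,I_{k+1}K_{k+1}=(\eta+\tau)(m+z)\,I_kK_k ,
\]
which is \eqref{eq:evid}. Solving either row of the system for $b/a$ gives $c_k$ as in \eqref{eq:const}. The normalization \eqref{eq:akunit} then follows from the standard indefinite integrals
\[
\int t\,I_\nu^2\,dt=\tfrac{t^2}{2}\bigl(I_\nu^2-I_{\nu-1}I_{\nu+1}\bigr),
\]
the analogous formula for $K_\nu$, and the change of variables $r\,dr=t\,dt/\kappa^2$. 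This settles iii), and shows that the eigenvalues in the gap are exactly the roots of \eqref{eq:evid}.

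\textbf{Part i).} At $z=\zet$ we have $\tfrac{m-z}{m+z}=\tfrac{\eta+\tau}{\eta-\tau}$, so \eqref{eq:evid} becomes $I_{k+1}(t)K_{k+1}(t)=I_k(t)K_k(t)$.
\begin{itemize}
\item For $k\ge0$, the product $I_\nu K_\nu$ is strictly decreasing in $\nu\ge0$, so equality is impossible.
\item For $k\le -1$, use $I_{-n}=I_n$ and $K_{-n}=K_n$; the equation becomes $I_{|k|-1}K_{|k|-1}=I_{|k|}K_{|k|}$, impossible for the same reason.
\end{itemize}
Hence $\zet$ is not an eigenvalue. At $z=\pm m$ we have $\kappa=0$, and the fiber solutions are powers $r^{k}$, $r^{-k-1}$ (possibly $\log r$). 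Matching powers that are square integrable at $0$ with powers square integrable at $\infty$, with the weight $r\,dr$, has no nonzero solution; a short case check in $k$ suffices.

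\textbf{Part ii), and the main obstacle.} Define
\[
F_k(z)=\frac{(\eta-\tau)(m-z)I_{k+1}K_{k+1}}{(\eta+\tau)(m+z)I_kK_k}.
\]
We show that $F_k(z)=1$ has a root $z_k$ for every $k$, and that $z_k\to\zet$. The behaviour at the endpoints is as follows.
\begin{itemize}
\item As $z\to m^-$, $\kappa\to0$ and $F_k\to0$ for $k\ge0$, since $I_\nu K_\nu\sim 1/(2\nu)$ as $t\to0$ for $\nu>0$, and $K_0$ blows up logarithmically.
\item As $z\to -m^+$, $F_k\to+\infty$.
\end{itemize}
Continuity then gives a root $z_k\in(-m,m)$ for each $k\ge0$. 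For $k\le-1$ we use the reflection $I_{-n}=I_n$, $K_{-n}=K_n$ and argue symmetrically.

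The accumulation \eqref{eq:zacc} comes from the uniform large-order asymptotics (Debye)
\[
I_\nu(t)K_\nu(t)=\frac{1}{2\sqrt{\nu^2+t^2}}\bigl(1+O(\nu^{-2})\bigr),
\]
uniformly in $t$ on compacts. This gives $I_{k+1}K_{k+1}/(I_kK_k)=1-O(1/k)$ uniformly for $z$ in compacts of $(-m,m)$, so the roots satisfy
\[
\frac{m-z_k}{m+z_k}=\frac{\eta+\tau}{\eta-\tau}+O(1/k),
\]
that is, $z_k\to\zet$. The sign of the $O(1/k)$ correction is opposite for $k\to+\infty$ and for $k\to-\infty$, which yields convergence from both sides.

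I expect this last step to be the main obstacle. It requires asymptotics of the Bessel ratio that are uniform in the argument $\kappa R$ near the endpoints, or else an a priori confinement of $z_k$ to a compact subinterval of $(-m,m)$. I would first try to obtain that confinement using the monotonicity of $F_k$ in $z$, and prove the monotonicity via the Turán-type inequalities for $I_\nu K_\nu$.
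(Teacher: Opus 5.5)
\textbf{The gap is in the accumulation step, as you suspected.} Your Debye asymptotics, uniform for $t$ in compacts of $(0,\infty)$, only control roots of $F_k=1$ lying in a fixed compact subset of $(-m,m)$. Nothing in your argument prevents roots from approaching $\pm m$, which is exactly the regime $t=\sqrt{m^2-z^2}\,R\to0^+$.

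The remedy you propose, monotonicity of $F_k$ in $z$, is not established and is not immediate. Write $F_k(z)=\frac{(\eta-\tau)(m-z)}{(\eta+\tau)(m+z)}\,f_k(t)$ with $f_k:=\frac{I_{k+1}K_{k+1}}{I_kK_k}$; on $(-m,0)$ the two factors move in opposite directions. The remedy is also unnecessary. What you need is control of $f_k$ that is uniform on all of $(0,mR]$, and this is what the paper proves. It shows $f_k$ is strictly increasing in $t$, using the explicit derivative together with Tur\'an-type inequalities and Segura's bound. Since $f_k(0^+)=\frac{k}{k+1}$ and $f_k(+\infty)=1$, this gives $\frac{k}{k+1}<f_k(t)<1$ for all $t>0$ and $k\ge1$. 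Alternatively, Debye's expansion is uniform for $t/\nu$ in all of $(0,\infty)$, so $f_k(t)=\sqrt{\frac{k^2+t^2}{(k+1)^2+t^2}}\,\bigl(1+O(k^{-2})\bigr)$ uniformly in $t>0$.

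Either way, every root satisfies $1<\frac{(\eta-\tau)(m-z_k)}{(\eta+\tau)(m+z_k)}<\frac{k+1}{k}$. Since $z\mapsto\frac{m-z}{m+z}$ is a decreasing homeomorphism of $(-m,m)$ onto $(0,\infty)$, this forces $z_k\to z_*$ from below, with no confinement needed. The identity $f_{-k-1}=1/f_k$ then handles $k\to-\infty$, from above. In short, apply the Tur\'an-type inequalities to $f_k$ as a function of $t$, not to $F_k$ as a function of $z$.

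Apart from this, your route agrees with the paper on the fibre reduction, the Bessel solutions, the transmission system, $c_k$ and the normalization. Your strict decrease of $\nu\mapsto I_\nu K_\nu$ is the paper's use of Segura's inequality. There are two differences, both in your favour.

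First, your global intermediate value argument ($F_k>0$, $F_k(m^-)=0$, $F_k(-m^+)=+\infty$) holds for every $k\in\Z$, including $k=0,-1$ where logarithms appear, so it gives a root for every $k$. By contrast, \cref{prop:fg} works on a single branch $z=\pm m\sqrt{1-(t/mR)^2}$ and, when $\tau/\eta<0$, only yields roots for $|k|\ge n_0$. Combined with the uniform bound above, your argument proves (ii) for the full index set $\Z$.

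Second, you treat $z=\pm m$ explicitly. The paper handles these thresholds only through the remark that square integrability forces $z\in(-m,m)$, and its Bessel ansatz degenerates there. Be careful with your sketch, though. At $z=m$ and $k\neq0$ the solutions are $(r^k,0)$ and $(-\frac{im}{k}r^{-k},r^{-k-1})$, so the $u$-component also contains $r^{-k}$. For $|k|\ge2$ both half-lines then carry nonzero $L^2$ solutions, and integrability alone does not conclude. You must impose \eqref{eq:bcradial}; using $\eta^2-\tau^2=4$, the transmission conditions reduce to $(\eta+\tau)mR/k=0$, and to $(\eta-\tau)mR/(k+1)=0$ at $z=-m$, which is impossible.

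Finally, in the determinant the Wronskian term comes with the factor $1-\frac{\eta^2-\tau^2}{4}$ and vanishes by criticality. So \eqref{eq:evid} follows without using the Wronskian identity at all.
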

	
	\begin{remark}
			Our arguments do not suffice to establish in full generality whether the discrete spectrum of $\DDet$ actually coincides with the set of solutions to \eqref{eq:evid}. As a matter of fact, claim (ii) singles out only a subsequence $\{z_k\}_{k \in \Z}$ of that set, and two distinct issues could prevent an exact identification: in principle, there may exist multiple distinct solutions to \eqref{eq:evid} associated with the same $k \in \Z$, and some of the eigenvalues could have finite multiplicity greater than one. Simplicity of the eigenvalues can be guaranteed, via \cref{prop:fg}, only when $\eta$ and $\tau$ have opposite signs. Further partial evidence is provided by \cref{cor:asef} below, which shows that, at least for sufficiently large $k$, the solutions $z_k$ to \eqref{eq:evid} are simple and pairwise distinct.
			
	\end{remark}

	\begin{remark}\label{rmk:nom}
		As a direct consequence of \cref{thm:maincirc}, the thresholds of the essential spectrum, namely $\pm m\in\sigma_{\mathrm{ess}}(\DDet)$, are not accumulation points for eigenvalues in the gap.
	\end{remark}

	\begin{figure}[t!]
        \centering
        \includegraphics[scale=.5]{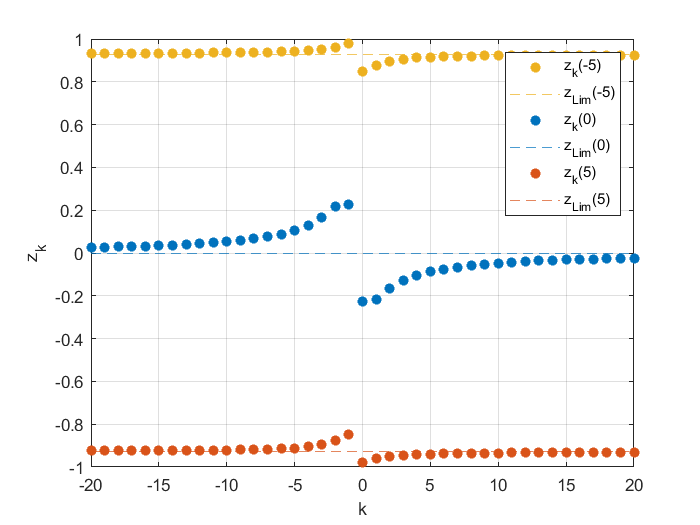}
		\caption{The discrete eigenvalues $z_k$ of $\DDet$, for $\eta = +\sqrt{4+\tau^2}$ and $\tau=-5,0,5$, in yellow, blue and red, respectively.}
		\label{fig:ev}
   \end{figure}

	\begin{remark}
		\cref{thm:mainline} and \cref{thm:maincirc} reveal a qualitative difference between the spectral behavior of critical Dirac delta-shell interactions supported a straight line and on a circle. At first sight, this distinction is somewhat unexpected. Indeed, a natural line of reasoning would attribute the infinite degeneracy of the eigenvalue $\zet$ in the straight-line case to the translational symmetry of that configuration. By analogy, one might therefore expect $\zet$ to remain a degenerate eigenvalue when the interaction is supported on a circle, now with rotations playing the role of translations.
		Our results show that this intuition is incorrect. Our expectation is that the straight-line configuration is exceptional, whereas the spectral behavior observed for the circle is generic. This leads us to formulate the following conjecture.	
		\medskip
		
		\textbf{\textit{Conjecture 1.}} For critical delta-shell interactions satisfying $\eta^2-\tau^2 = 4$ and supported on a generic smooth, simple (not necessarily closed) curve, the isolated point $\zet$ in the essential spectrum is an accumulation point of eigenvalues, but it is not itself an eigenvalue. 
		\medskip 
		
		\noindent
		Concerning the above conjecture, we mention that in \cite{BEHT25} the persistence of the isolated point $\zet$ in the essential spectrum is proved in the critical case $\eta^2-\tau^2 = 4$, considering unbounded curves with straight ends. However, its nature does not seem to be investigated. It would be interesting to understand whether local deformations of a straight line may break the degeneracy.
		\smallskip 

		\noindent 
		We refer to the forthcoming \cref{cor:asef} for results that partially support Conjecture 1. In particular, it is shown therein that the asymptotic expansion of the eigenvalues $z_k$ for large $k$ depends explicitly on the radius (and hence on the curvature) of the circle.
	\end{remark}

	\begin{remark}\label{rmk:inf}
		\cref{thm:maincirc} establishes the existence of an infinite number of eigenvalues for critical strengths $\eta,\tau \in \R$ with $\eta^2-\tau^2 = 4$, in the case of delta-shell interactions supported on a circle. On the other hand, it is known that for generic closed simple curves the discrete spectrum has finite cardinality whenever $\eta^2-\tau^2 \neq 4$, see \emph{e.g.}, \cite[Thm. 1.1]{BHOP20}. In light of this, it is natural to formulate the following conjecture.
		\medskip
		
		\textbf{\textit{Conjecture 2.}} The number of discrete eigenvalues of $\DDet$ diverges as the critical condition $\eta^2-\tau^2 = 4$ is approached, that is,
			\begin{equation}\label{eq:discinf}
				\lim_{\eta \to \pm \sqrt{4+\tau^2}} \# \big[\sigma_{\mathrm{disc}}(\DDet)\big] = +\infty\,, \qquad \mbox{for any $\tau \in \R$}\,.
			\end{equation}
		\smallskip 
		
		\noindent
		We expect this behavior to hold for interactions supported on any closed simple curve, and not only for the circular geometry considered here. Of course, one could go further and examine the rate at which infinity cardinality is reached in \eqref{eq:discinf}.
	\end{remark}

	\begin{remark}
		The spinors $\psi_k$	characterized in item (iii) of \cref{thm:maincirc} are simultaneous eigenfunctions of the Dirac Hamiltonian $\DDet$ and of the total angular momentum $J_3$, see \eqref{eq:L3}, with
		\[
			J_3 \psi_k = (k+1/2)\,\psi_k\,, \qquad \mbox{for any $k \in \Z$}\,.
		\]	
		One may also derive explicit expressions for the expectation values of the spin component $\sigma_3$, the radial coordinate $r$	and the velocity components 
		\[
			v_r = i[\DDz,r] = \sigma_r\,, \qquad 
			v_\theta = i[\DDz,\theta] = \frac{1}{r}\,\sigma_\theta\,.
		\]
		As a matter of fact, a simple algebraic cancellation yields
		\begin{equation}\label{eq:vr}
			\langle v_r \rangle_k := \langle \psi_k, v_r\, \psi_k \rangle = 0\,, \qquad \mbox{for all $k \in \Z$}\,,
		\end{equation}
		and hence
		\begin{align*}
			\langle (\Delta v_r)^2 \rangle_k := \|v_r \psi_k\|^2 - \langle \psi_k, v_r \psi_k \rangle^2 = \|\psi_k\|^2 = 1\,.
		\end{align*}
		Eq. \eqref{eq:vr} indicates that, as expected, the bound states $\psi_k$ exhibit no net motion in the radial direction.
		\\
		The expectation values of the spin and of the angular velocity are given by
		\begin{multline*}
			\langle \sigma_3 \rangle_k :=
			\langle \psi_k, \sigma_3\, \psi_k \rangle 
			= \frac{|a_k|^2}{m^2 - z_k^2} \left[
			\int_{0}^{\sqrt{m^2-z_k^2}\,R} dt\,t \left(I_k^2(t) - \frac{m-z_k}{m+z_k}\,I_{k+1}^2(t) \right) \right. \\
			\left. -\, c_k^2 \int_{\sqrt{m^2-z_k^2}\,R}^{+\infty} dt\,t \left(K_k^2(t) - \frac{m-z_k}{m+z_k}\,K_{k+1}^2(t) \right)
			\right] ,
		\end{multline*}
		\begin{equation*}
			\langle v_\theta \rangle_k :=
			\langle \psi_k, v_\theta\, \psi_k \rangle 
			= \frac{2\,|a_k|^2}{m + z_k} \left[c_{k}^2 \int_{\sqrt{m^2-z_k^2}\,R}^{\infty} K_{k}(t) K_{k+1}(t)\,dt - \int_{0}^{\sqrt{m^2-z_k^2}\,R} I_{k}(t) I_{k+1}(t)\,dt \right].
		\end{equation*}
		Although the integrals appearing above can be expressed in terms of generalized hypergeometric functions and Meijer $G$-functions, see \cite[Ch. 16]{NIST}, the resulting formulas are rather cumbersome and offer limited additional insight. Instead, let us highlight some noteworthy qualitative features, that follow from basic properties of Bessel functions.	
		\\
		In general, as no cancellations occur for generic choices of the parameters, we have
		\begin{equation*}
			\langle \sigma_3 \rangle_k \neq 0\,, 
			\qquad \mbox{and}\qquad 
			\langle v_\theta\rangle_k \neq 0\,.
		\end{equation*}
		To say more, for fixed $k \in \Z$, it is possible to prove that
		\begin{gather*}
			\langle \sigma_3 \rangle_k =  -\,\frac{\tau}{\eta} + O\left(\tfrac{1}{R^2}\right), \qquad \mbox{and}\qquad
			\langle v_\theta\rangle_k = -\,\frac{1}{|\eta| R}\,e^{-\frac{4 m R}{|\eta|}} \Big[1 + O\left(\tfrac{1}{R}\right) \Big]\,,
			\qquad \mbox{as $R \to +\infty$}\,.
		\end{gather*}
		The asymptotic behavior of $\langle \sigma_3 \rangle_k$ agrees with that obtained in the straight line case, cf. \cref{cor:eigline}, as expected. By contrast, it is perhaps surprising that $\langle v_\theta\rangle_k$ becomes negative definite (although exponentially small) for large radii, regardless of the other parameters.
	\end{remark}	
	
	\begin{remark}
		The accumulation law in \eqref{eq:zacc} shows that energy levels associated with large total angular momentum cluster in the infrared sector of $\DDet$. This behavior is counterintuitive from a physical perspective: semiclassical reasoning would suggest that large angular momentum corresponds to high kinetic energy, and hence to eigenvalues drifting toward the ultraviolet sector, or at least to the continuous thresholds. Here, instead, high–angular–momentum eigenstates remain strongly localized near the interaction support, as if the centrifugal growth were exactly compensated by the singular coupling, effectively pinning their energies near a resonance value. In this connection, see also the forthcoming \cref{rem:concen} and the related Figs. \ref{fig:modu}-\ref{fig:l2t_2b}.
		 This delicate cancellation, together with the resulting phenomenology, should perhaps be viewed as an additional signature of criticality.
	\end{remark}
	\medskip

The results in \cref{thm:maincirc} are complemented by some further observations.
\smallskip

\noindent
We first analyze the asymptotic behavior of the eigenvalues $\{z_k\}_{k \in \Z}$ and eigenfunctions $\{\psi_k\}_{k \in \Z}$ from \cref{thm:maincirc} as $k \to \pm \infty$.

	\begin{corollary}\label{cor:asef}
		Under the assumptions of \cref{thm:maincirc}, for $k\to \pm \infty$ there holds
			\begin{equation}\label{eq:evas}
				z_k/m = -\frac{\tau}{\eta} - \left(\frac{2}{\eta^2}\right) \frac{1}{k} + \left(\frac{\eta+\tau}{\eta^3}\right)\frac{1}{k^2}+ \left(\frac{(8 m R)^2 - (\eta + \tau)^2}{2\eta^4}\right)\frac{1}{k^3} +O\!\left(\frac{1}{k^4}\right) .
			\end{equation}
		Moreover, 
			\begin{equation}\label{eq:zerolim}
				\lim_{k \to \pm \infty} \psi_k(r)=0\,,\qquad \mbox{for all\, $r\in [0,+\infty) \,\setminus\, \{R\}$}\,.
			\end{equation}
	\end{corollary}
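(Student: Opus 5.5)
The approach is to work with the secular equation \eqref{eq:evid}, using the uniform (Debye) asymptotic expansions of $I_\nu(\nu w)$ and $K_\nu(\nu w)$ for large order $\nu$. Fix $k\to+\infty$; the case $k\to-\infty$ reduces to this one via $I_{-n}=I_n$ and $K_{-n}=K_n$, since the ratio in \eqref{eq:evid} for $k=-n-1$ involves $I_n K_n/(I_{n+1}K_{n+1})$. Set $t=\sqrt{m^2-z^2}\,R$. The key quantity is the product $P_\nu(t):=I_\nu(t)K_\nu(t)$. From the Debye expansions one obtains the well-known large-$\nu$ expansion
\[
P_\nu(t)=\frac{1}{2\sqrt{\nu^2+t^2}}\Big(1+\frac{t^2}{2(\nu^2+t^2)^{2}}\cdot(\text{poly}) +O(\nu^{-4})\Big),
\]
which is uniform for $t$ in compacts. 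Here $t$ stays bounded, because $z_k\to\zet\in(-m,m)$ by \eqref{eq:zacc} from \cref{thm:maincirc}. Consequently $P_{k+1}/P_k=\sqrt{(k^2+t^2)/((k+1)^2+t^2)}\,(1+O(k^{-3}))$, and this can be expanded in powers of $1/k$: to third order it is $1-\frac1k+\frac{1}{k^2}-\frac{1+ t^2/2\cdot(\dots)}{k^3}$. This is the only place where $t$, and hence $R$, enters. I would compute the coefficient carefully from the first two Debye correction terms $u_1,u_2$.

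Next I rewrite \eqref{eq:evid} as
\[
\frac{m-z}{m+z}=\frac{\eta+\tau}{\eta-\tau}\,\frac{P_k(t)}{P_{k+1}(t)}.
\]
With $\zeta:=z/m$, the left side equals $\frac{\eta+\tau}{\eta-\tau}$ exactly at $\zeta=-\tau/\eta$. I then make the ansatz $\zeta=-\tau/\eta+\alpha_1/k+\alpha_2/k^2+\alpha_3/k^3+O(k^{-4})$, expand both sides, and match coefficients order by order. Put $g(\zeta)=\frac{1-\zeta}{1+\zeta}$, so that $g'(\zeta)=-2/(1+\zeta)^2$. At $\zeta_*=-\tau/\eta$ we have $1+\zeta_*=(\eta-\tau)/\eta$, and the identity $(\eta-\tau)(\eta+\tau)=4$ simplifies the coefficients. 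At first order, $g'(\zeta_*)\alpha_1=g(\zeta_*)\cdot 1$, which gives $\alpha_1=-\frac{(1-\zeta_*)(1+\zeta_*)}{2}=-\frac{\eta^2-\tau^2}{2\eta^2}=-\frac{2}{\eta^2}$, in agreement with \eqref{eq:evas}. The second- and third-order coefficients follow in the same way. In the $k^{-3}$ term, $t_*^2=(m^2-\zeta_*^2m^2)R^2=\frac{4m^2R^2}{\eta^2}$ appears, and the term $(8mR)^2/(2\eta^4)$ should arise from it. To make the ansatz rigorous, I would apply the implicit function theorem in the variable $\epsilon=1/k$ to $F(\zeta,\epsilon)=g(\zeta)-\frac{\eta+\tau}{\eta-\tau}P_k/P_{k+1}$, which extends smoothly to $\epsilon=0$ with $\partial_\zeta F\neq0$. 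This argument also yields local uniqueness and simplicity for large $|k|$.

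For \eqref{eq:zerolim}, I use the explicit eigenfunctions \eqref{eq:uvIK}. Take $r<R$ first. Normalization gives $|a_k|^2\cdot\|I_k(\cdot)\|^2_{L^2((0,R))}\lesssim1$, and since $I_k(tr)/I_k(tR)\sim (r/R)^k$ decays geometrically, the pointwise values $a_kI_k(t r)$ are of order $(r/R)^k$ times a quantity that is at most polynomial in $k$. This follows from the Debye expansion: the $L^2$ mass of $I_k$ concentrates within $O(1/k)$ of $R$, so $|a_k|I_k(tR)=O(\sqrt k)$. For $r>R$ the argument is symmetric using $c_kK_k(tr)/(c_kK_k(tR))\sim(R/r)^k$. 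One checks from \eqref{eq:const} and the boundary conditions that $c_kK_k(t_k)$ is comparable to $I_k(t_k)$. The same reasoning applies to $v_k$, with $k+1$ in place of $k$.

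The main obstacle I expect is the third-order coefficient. It requires the expansion of $I_\nu K_\nu$ to relative order $\nu^{-3}$ with the exact $t$-dependence, and careful bookkeeping of how the $O(1/k)$ shifts of $\zeta$ feed back through $t=\sqrt{1-\zeta^2}\,mR$. I would start from the known expansion $I_\nu(t)K_\nu(t)\sim\frac{1}{2\nu}\big(1-\frac{t^2}{2\nu^2}+\dots\big)$ for fixed $t$, with the shift $\nu=k+\tfrac12$ centring things, and verify the $(8mR)^2$ term symbolically.
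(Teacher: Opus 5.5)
Your route to \eqref{eq:evas} is the paper's: bounded-argument, large-order expansions of $I_k,K_k$, an expansion of $I_{k+1}K_{k+1}/(I_kK_k)$ in $1/k$, and order-by-order matching. Your implicit-function step adds a justification of the ansatz that the paper simply assumes. You could even skip it: $z_k\to\zet$ is already known and $\zeta\mapsto\frac{1-\zeta}{1+\zeta}$ is an explicit local diffeomorphism, so you can invert it directly.

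For \eqref{eq:zerolim} you argue differently from the paper, which derives explicit asymptotics for $c_k$ and $a_k$. You only use that each of the four pieces of $\|\psi_k\|^2$ is at most $1$, together with geometric decay of $I_k(tr)/I_k(tR)$ and $K_k(tr)/K_k(tR)$. That is sound and more robust. The decay even holds as exact bounds $\le (r/R)^{|k|}$ and $\le (R/r)^{|k|}$, because $x^{-n}I_n$ increases and $x^{n}K_n$ decreases. Comparing with an intermediate radius instead of $R$ also removes the need for your concentration claim. You do not need the comparison of $c_kK_k(t_k)$ with $I_k(t_k)$ at all.

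\textbf{The gap: the $k^{-3}$ coefficient.} You never compute it; you only assert that it ``should'' produce $(8mR)^2$.

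First, your error bound $(1+O(k^{-3}))$ is too weak to determine any third-order term. In fact, for bounded $t$,
\[
I_\nu(t)K_\nu(t)=\frac{1}{2\sqrt{\nu^2+t^2}}\Bigl(1-\frac{t^2}{2\nu^4}+O(\nu^{-6})\Bigr).
\]
Equivalently, \eqref{eq:Ikasy}--\eqref{eq:Kkasy} give $I_kK_k=\frac{1}{2k}\bigl(1-\frac{t^2}{2k^2}+O(k^{-4})\bigr)$. So the Debye corrections do not enter at order $k^{-3}$, and the prefactor alone yields
\[
\frac{I_{k+1}(t)K_{k+1}(t)}{I_k(t)K_k(t)}=1-\frac1k+\frac1{k^2}-\frac{1-t^2}{k^3}+O\Bigl(\frac{1}{k^4}\Bigr),\qquad k\to\pm\infty .
\]

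Second, feeding this into your matching with $t_*^2=4m^2R^2/\eta^2$ gives
\[
\alpha_3=\frac{4\eta^2t_*^2-(\eta+\tau)^2}{2\eta^4}=\frac{(4mR)^2-(\eta+\tau)^2}{2\eta^4},
\]
not $\frac{(8mR)^2-(\eta+\tau)^2}{2\eta^4}$. As a check, take $\tau=0$, $\eta=2$. The equation then reads $\zeta=(f-1)/(f+1)$, with $f$ the Bessel ratio above, and this expands to $-\frac{1}{2k}+\frac{1}{4k^2}+\frac{m^2R^2-1/4}{2k^3}$.

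So the step you defer cannot be completed for the statement as written. The paper reaches $(8mR)^2$ only because \eqref{eq:asyfrac} records $-(1-4t^2)/k^3$, which is inconsistent with its own \eqref{eq:Ikasy}--\eqref{eq:Kkasy}. Your first two coefficients and your argument for \eqref{eq:zerolim} stand. The $R$-dependent term in \eqref{eq:evas} should read $(4mR)^2$.
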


	\begin{remark}\label{rmk:universal}
		In the asymptotic expansion \eqref{eq:evas} one sees that the first three leading terms \emph{do not} depend on the radius $R>0$ of the circle, whereas the dependence on $R$ enters into play at the fourth-order. Incidentally, one may derive explicit expressions also for higher order terms in the expansion \eqref{eq:evas}.
		This observation suggests that, for a generic closed simple curve $\Sigma\subseteq \R^2$, curvature effects may appear only at order $O(k^{-3})$, while the first three terms of the expansion could be universal. From a broader perspective, it is natural to conjecture that the leading terms in the accumulation law are determined by topological invariants.
		\medskip
		
		\textbf{\textit{Conjecture 3.}} The accumulation law for the eigenvalues $z_k$ is governed by topological invariants of the geometric locus where the interaction is supported.
		\medskip 

		\noindent
		This behavior would be reminiscent of the Efimov effect, in which a three-body quantum system with pairwise interactions tuned to resonance exhibits an infinite sequence of eigenvalues accumulating at the continuous threshold according to a universal law that is independent of the specific form of the two-body interaction potentials \cite{FFT23,NE17}.
	\end{remark}  
	
	\begin{remark}\label{rem:concen}
		Taking into account that all the eigenstates $\psi_k$ have unit norm in $L^2(\R^2;\CC^2)$, see item (iii) in \cref{thm:maincirc}, it follows from \eqref{eq:zerolim} that they cannot converge in strong $L^2$-sense. Indeed, the plots reported in Fig. \ref{fig:modu} show that the modulus squared of the eigenfunctions tends to concentrate at the interface $\{r = R\}$.
	\end{remark}

	\begin{figure}[t!]
        \centering
        \includegraphics[scale=.5]{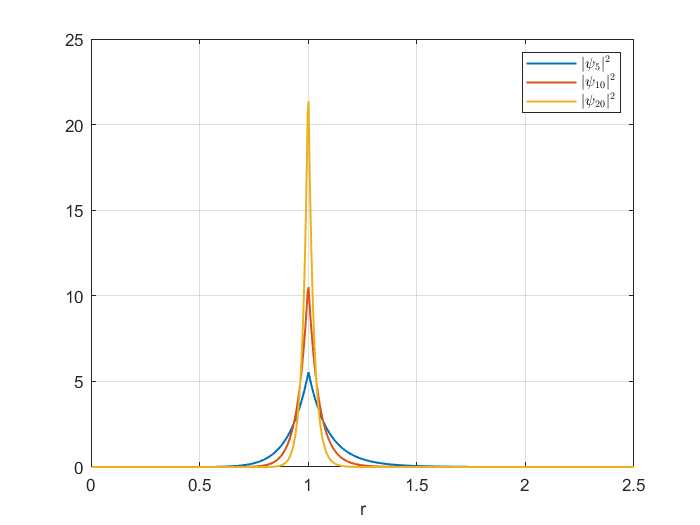}
		\caption{Plots of the square-modulus $\vert \psi_k\vert^2$ of the normalized eigenfunctions \eqref{eq:formk}, as a function of the radial coordinate $r > 0$, for $k=5$ (blue), $k=10$ (red) and $k=20$ (yellow), for $m = R = 1$ and $\tau = 0$.}
		\label{fig:modu}
	\end{figure}
	\medskip

The purely electrostatic case $\tau=0$ is even more exceptional, owing to specific symmetry properties of the Hamiltonian operator. In this setting, the discrete spectrum is symmetric around the accumulation point $\zet =0$. More precisely, the following holds.

	\begin{corollary}\label{cor:symev}
		Under the assumptions of \cref{thm:maincirc}, if $\tau=0$ the discrete eigenvalues of $\DD_{\eta,0}$ are symmetric with respect to $\zet=0$, meaning that
		\begin{equation}\label{eq:symz}
			z\in\sigma_{\mathrm{disc}}(\DD_{\eta,0}) \qquad \iff \qquad  -z\in\sigma_{\mathrm{disc}}(\DD_{\eta,0})\,.
		\end{equation}
	\end{corollary}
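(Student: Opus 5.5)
The plan is to exhibit an explicit antiunitary (or unitary) symmetry of $\DD_{\eta,0}$ that anticommutes with the Hamiltonian. The candidate is the charge-conjugation-type map
\[
(C\psi)(x,y) := \sigma_1\,\overline{\psi(x,y)}\,.
\]
On the free part we check $\sigma_1\overline{(-i\sigma_1\partial_x-i\sigma_2\partial_y+m\sigma_3)\psi}$. Since $\bar\sigma_1=\sigma_1$, $\bar\sigma_2=-\sigma_2$ and $\bar\sigma_3=\sigma_3$, this equals $\sigma_1(i\sigma_1\partial_x - i\sigma_2\partial_y+m\sigma_3)\bar\psi$. Using $\sigma_1\sigma_2=-\sigma_2\sigma_1$ and $\sigma_1\sigma_3=-\sigma_3\sigma_1$, we obtain $(i\sigma_1\partial_x+i\sigma_2\partial_y-m\sigma_3)\sigma_1\bar\psi = -\DDz C\psi$. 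Hence $C\DDz=-\DDz C$ on each $\Omega_\pm$.

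Next we check that $C$ preserves $\dom(\DD_{\eta,0})$. Here $C$ maps $\Hil(\siv,\Omega_\pm)$ to itself and commutes with traces. The boundary condition with $\tau=0$ reads $-i(\nv\cdot\siv)[\psi]=\tfrac{\eta}{2}\{\psi\}$. Conjugating it and multiplying by $\sigma_1$ gives $\sigma_1\,\overline{-i(\nv\cdot\siv)}\,[\bar\psi] = \tfrac\eta2\{C\psi\}$. We compute $\overline{-i(\nv\cdot\siv)}=i(n_1\sigma_1-n_2\sigma_2)$, so that $\sigma_1 i(n_1\sigma_1-n_2\sigma_2)=i(n_1\sigma_1+n_2\sigma_2)\sigma_1$. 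This yields $i(\nv\cdot\siv)[C\psi]=\tfrac\eta2\{C\psi\}$, which has the wrong sign.

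So instead we take $C'\psi := \sigma_2\overline{\psi}$. Since $\sigma_2\bar\sigma_j=-\sigma_j\sigma_2$ for $j=1,2,3$, we get $\sigma_2\overline{-i\sigma_j\partial_j} = -i\sigma_j\partial_j\sigma_2$, i.e. $C'$ commutes with the kinetic part, while $\sigma_2\sigma_3=-\sigma_3\sigma_2$ flips the mass. That is again not anticommutation. We therefore combine with the unitary $\sigma_3$: the map $U\psi:=\sigma_3\psi$ anticommutes with $-i\siv\cdot\nabla$ and commutes with $m\sigma_3$. It follows that $V:=\sigma_3\, \sigma_2\bar\cdot\,$ (equivalently $\sigma_1\bar\cdot$ up to a phase) is the natural candidate. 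The cleanest route is therefore to work fiberwise in the circle decomposition \eqref{eq:csum}, where everything is explicit.

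Concretely, for $\DD_{\eta,0}[k]$ in \eqref{eq:polarfiber} define the map $(u,v)\mapsto(\bar v,\bar u)$ up to factors, sending fiber $k$ to fiber $-k-1$. If $\DD_{\eta,0}[k](u,v)=z(u,v)$, then $(\tilde u,\tilde v):=(\bar v,-\bar u)$ satisfies the fiber $-k-1$ equation with eigenvalue $-z$: indeed $-i(\partial_r-\tfrac{k}{r})u=(z+m)v$ becomes, after conjugation, $-i(\partial_r+\tfrac{(-k-1)+1}{r})\tilde u\cdot(\dots)$, and one checks the signs so that $m\tilde u-i(\partial_r+\tfrac{-k}{r})\tilde v=-z\tilde u$. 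Moreover \eqref{eq:bcradial} with $\tau=0$ is symmetric under swapping $u\leftrightarrow v$ and complex conjugation up to sign. Alternatively, and more robustly, one can verify the symmetry at the level of \eqref{eq:evid}: with $\tau=0$, replacing $(k,z)$ by $(-k-1,-z)$ and using $I_{-n}=I_n$, $K_{-n}=K_n$ turns the left side of \eqref{eq:evid} into its reciprocal. Hence solutions map to solutions, and eigenfunctions given by \eqref{eq:uvIK} map accordingly.

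The main obstacle is getting the signs in the boundary condition right so that the map preserves the domain; the fiberwise check against \eqref{eq:bcradial} is the safest way to settle it, since there it is a two-line computation. Since the map is a bijection (an involution up to sign) between eigenspaces at $z$ and $-z$, and $\zet=0$ is the only point of $\sigma_{\mathrm{ess}}$ in the gap, we obtain \eqref{eq:symz} including multiplicities.
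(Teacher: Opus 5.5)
\textbf{The symmetry-map argument has a gap; your fallback through the Bessel equation is a complete proof by a different route.}

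\medskip
\textbf{Where the map argument fails.} The gap is the sign you flag and then set aside. Your first computation is correct and matches the paper's first step: $C=\Con$ anticommutes with the free operator but sends the $\eta$-condition to the $(-\eta)$-condition, i.e.\ $\Con\,\DD_{\eta,0}=-\DD_{-\eta,0}\,\Con$. The detour through $\sigma_2\overline{\,\cdot\,}$ and $\sigma_3$ only returns $\sigma_3\sigma_2\overline{\,\cdot\,}=-i\sigma_1\overline{\,\cdot\,}$. That is $\Con$ up to a constant phase, which changes nothing.

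On the fibers, $\Con$ is $(u,v)\mapsto(\bar v,\bar u)$ from $k$ to $-k-1$. This map does solve the fiber equation with eigenvalue $-z$. However, it turns \eqref{eq:bcradial} into $-i[\tilde v]_R=-\eta\langle\tilde u\rangle_R$ and $-i[\tilde u]_R=-\eta\langle\tilde v\rangle_R$. So ``symmetric up to sign'' is exactly the obstruction, not a harmless factor.

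Your explicit choice $(\bar v,-\bar u)$, i.e.\ $i\sigma_2\overline{\,\cdot\,}$, has the opposite defect. It preserves the boundary condition, but $\DD_{\eta,0}[-k-1]$ applied to it gives $((2m+z)\tilde u,(z-2m)\tilde v)$. It is an eigenfunction at energy $z$ of the operator with mass $-m$, not at energy $-z$ of $\DD_{\eta,0}$. No global phase repairs this.

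\medskip
\textbf{The missing idea.} You need a sign that differs on the two sides of $\Sigma$, which is the paper's $\UU$. Take $(\tilde u,\tilde v)=\pm(\bar v,\bar u)$ for $r\lessgtr R$. This exchanges jumps and means: $[\tilde v]_R=2\langle\bar u\rangle_R$ and $\langle\tilde u\rangle_R=\tfrac12[\bar v]_R$. Hence $-i[\tilde v]_R=\tfrac{4}{\eta}\langle\tilde u\rangle_R$, which is the correct condition iff $2/\eta=\eta/2$, i.e.\ $\eta^2=4$.

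Your operator-level argument never uses criticality, and any correct argument must. For general $\eta$ the same conjugation only gives $\Con\UU\,\DD_{\eta,0}\,(\Con\UU)^{-1}=-\DD_{4/\eta,0}$. Consequently your closing claim of a bijection between the eigenspaces at $z$ and $-z$ is not supported by the map you wrote down.

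\medskip
\textbf{Your fallback works.} For $\tau=0$, $t=\sqrt{m^2-z^2}\,R$ is even in $z$, and $I_{-n}=I_n$, $K_{-n}=K_n$. Hence $(k,z)\mapsto(-k-1,-z)$ turns the left-hand side of \eqref{eq:evid} into its reciprocal. By \cref{lem:implicit}, $z\in(-m,m)$ is an eigenvalue exactly when \eqref{eq:evid} holds for some $k\in\Z$. Moreover $\sigma_{\mathrm{disc}}(\DD_{\eta,0})$ is precisely the set of eigenvalues in $(-m,m)\setminus\{0\}$. Together these give \eqref{eq:symz}.

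Criticality is built into \eqref{eq:evid} itself: the Wronskian term in the determinant of \eqref{eq:bcsystem} drops out only when $\eta^2-\tau^2=4$.

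\medskip
\textbf{Comparison with the paper.} The paper proves the operator identity $\Con\,\UU\,\DD_{\eta,0}\,(\Con\,\UU)^{-1}=-\DD_{\eta,0}$. This gives the symmetry of the whole spectrum, with multiplicities, for any smooth closed curve. Your route is tied to the circle and the Bessel characterization, but it is shorter given \cref{lem:implicit}. It is the invariance the paper records in the remark after the corollary. Multiplicities are not part of the claim; if you want them on this route, use that each fiber eigenspace is one-dimensional.
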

 
	\begin{remark} 
		Symmetries of the spectrum of $\DDet$ were previously identified in \cite[Prop. 4.15]{BHOP20} for the case of a generic, smooth closed simple curve $\Sigma\subseteq \R^2$. The key observation here is that the Hamiltonian operator $\DDet$ is mapped to its opposite by the combined, conjugate action of the following operators.
		\begin{enumerate}

			\item \emph{Charge conjugation}, that is the anti-unitary operator
				\begin{equation}\label{eq:cc}
					\Con: L^2(\R^2\,;\CC^2)\to L^2(\R^2\,;\CC^2)\,, \qquad \Con \psi := \sigma_1\overline{\psi}\,,
			\end{equation}  
				where $\overline{\psi}$ denotes the complex conjugate of $\psi$.
			
			\item \emph{Anti-symmetric phase rotation}, that is the unitary and self-adjoint operator
				\begin{eqnarray}
					& \UU : L^2(\Omega_+\,;\CC^2)\oplus L^2(\Omega_-\,;\CC^2) \to L^2(\Omega_+\,;\CC^2)\oplus L^2(\Omega_-\,;\CC^2)\,,
						\nonumber \\			
					& \UU(f_+ \oplus f_-) := f_+ \oplus (-f_-)\,. \label{eq:U}
				\end{eqnarray}
		\end{enumerate}
		The name proposed for $\UU$ is motivated by the fact that it leaves the wavefunction invariant in the exterior domain $\Omega_-$, while implementing a rotation by $\pi$ of the phase in the interior domain $\Omega_+$.
		
		As a matter of fact, with the above definitions one has
		\begin{equation}\label{eq:CUH}
			\Con\, \UU\, \DD_{\eta,0}\, (\Con\, \UU)^{-1} = - \DD_{\eta,0}\,.
		\end{equation}			
		Even though here we are considering only the case of a circle, the proof of \cref{cor:symev} actually works for a generic smooth closed simple curve, provided that $\tau=0$.
	\end{remark}
	
	\begin{figure}[t!]
	\begin{minipage}[b]{0.4\textwidth}
        \centering
        \includegraphics[width=\textwidth]{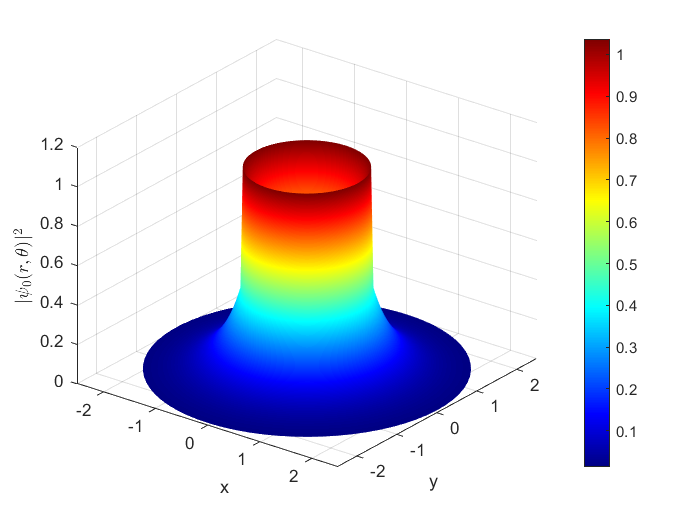}
    \end{minipage}
    \begin{minipage}[b]{0.4\textwidth}
        \centering
        \includegraphics[width=\textwidth]{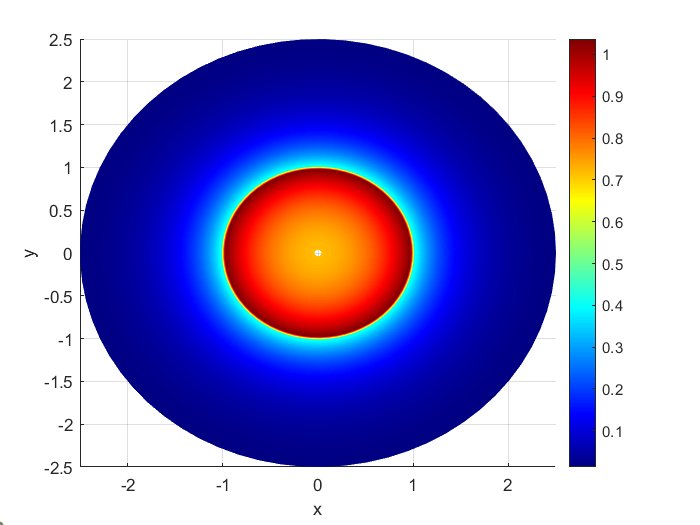}
    \end{minipage}
    \caption{Plot of $\vert \psi_k\vert^2$ for $k=0$ and $\eta=2\sqrt 2$, $\tau=-2$.}
    \label{fig:l2t_2a}
	\end{figure}	
	\begin{figure}[t!]
    \begin{minipage}[b]{0.4\textwidth}
        \centering
        \includegraphics[width=\textwidth]{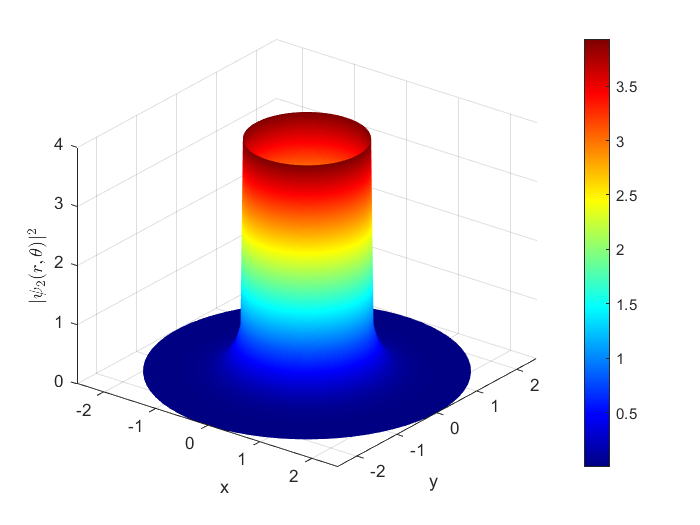}
    \end{minipage}
    \begin{minipage}[b]{0.4\textwidth}
        \centering
        \includegraphics[width=\textwidth]{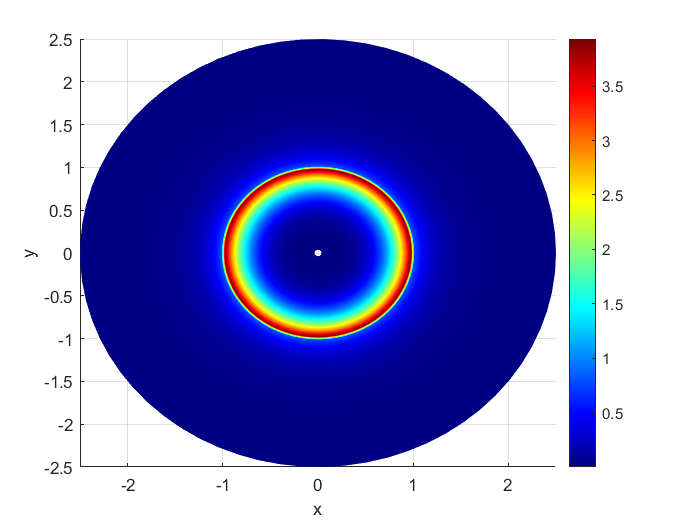}
    \end{minipage}
    \caption{Plot of $\vert \psi_k\vert^2$ for $k=2$ and $\eta=2\sqrt 2$, $\tau=-2$.}
    \label{fig:l2t_2b}
	\end{figure}

	\begin{remark}
		Numerical results suggest that the symmetry exhibited in \cref{cor:symev} is indeed broken when $\tau \neq 0$, see Fig. \ref{fig:ev}.
	\end{remark}
 
	\begin{remark}
		At first sight, the asymptotic expansion \eqref{eq:evas} seems to conflict with the spectral symmetry established in \cref{cor:symev}, at least for $\vert k\vert$ large. In fact, it appears to be incompatible with an identity of the form $z_k = -z_{-k}$. However, this apparent discrepancy is only a matter of labeling: the eigenvalues are not a priori indexed so as to satisfy this identity.
		When $\tau = 0$ it is easy to see that the characterizing equation \eqref{eq:evid} is invariant under the simultaneous replacements $z \to - z$ and $k \to - k-1$, so that the correct symmetry condition takes the form
		\[
			z_k = - z_{-k-1} \qquad \mbox{for all $k \in \Z$}\,.
		\]
	\end{remark}

\section{Proofs}

\subsection{Proof of the statements in \cref{sec:Dline}}
We begin by addressing the case of the line, taking $\Sigma = \Sigma_L$ as in \eqref{eq:line}. For completeness, we present in the forthcoming proof a self-contained derivation of the eigenfunctions of $\DDet$, reproducing the computation from scratch.

\begin{proof}[Proof of \cref{thm:mainline}]
We look for solutions $\psi \in \dom(\DDet)$ of the eigenvalue problem
\begin{equation}\label{eq:eigenDDet}
	\DDet \psi = z\, \psi \,,
\end{equation}
for a generic $z \in \R$. By partial Fourier transform, for any fixed $k \in \R$, we can refer to the fiber operator $\DDket$ and to the corresponding reduced eigenvalue equation
\begin{equation}\label{eq:eigenDDket}
	\DDket \psih = z\,\psih\,,
\end{equation}
where $\psih \equiv \psih(\,\cdot\,,k) = (\Fou_2 \psi)(\,\cdot\,,k) \in \dom(\DDket)$. In the sequel, we first determine all the distributional solutions of \eqref{eq:eigenDDket}, and then proceed to implement the square-integrability requirement and the boundary condition at $x = 0$.

Setting $\psih = \psih_+ \oplus \psih_-$, with $\psih_{\pm} = (\hat{u}_\pm,\hat{v}_\pm) \in L^2(\R_{\pm},dx; \CC^2)$, \eqref{eq:eigenDDket} reduces to the following system of first order ODEs on $\R_{\pm}$, respectively:
\begin{equation}\label{eq:eeini}
\begin{cases}
	-i(\partial_x + k) \hat{v}_\pm - (z-m) \hat{u}_\pm = 0\,, \\
	-i(\partial_x - k) \hat{u}_\pm - (z+m) \hat{v}_\pm = 0\,.
\end{cases}
\end{equation}
By substitution, we obtain
\begin{equation*}
\begin{cases}
	(- \partial_{xx} + k^2 + m^2 - z^2) \hat{u}_\pm = 0\,, \\
	(- \partial_{xx} + k^2 + m^2 - z^2) \hat{v}_\pm = 0\,,
\end{cases}
\end{equation*}
for which the generic solution is of the form
\begin{equation}\label{eq:gensol}
\begin{cases}
	\hat{u}_\pm(x) = a_\pm(k)\, e^{\sqrt{k^2 + m^2 - z^2}\,x} + b_\pm(k)\, e^{-\sqrt{k^2 + m^2 - z^2}\,x} \,, \\
	\hat{v}_\pm(x) = c_\pm(k)\, e^{\sqrt{k^2 + m^2 - z^2}\,x} + d_\pm(k)\, e^{-\sqrt{k^2 + m^2 - z^2}\,x} \,,
\end{cases}
\end{equation}
where $a_\pm(k),b_\pm(k),c_\pm(k),d_\pm(k)$ are complex coefficients, depending solely on the momentum variable $k \in \R$.
It is evident that, if $|z| \geqslant \sqrt{m^2 + k^2}$, none of the functions in \eqref{eq:gensol} belongs to $L^2(\R,dx)$. Therefore, any admissible eigenvalue of $\DDket$ must indeed lie in the (shifted) open gap interval, that is
\[
	\sigma_{\mathrm{p}}(\DDket) \subset \big(-\sqrt{m^2+k^2},\sqrt{m^2+k^2}\,\big)\,.
\]
Taking this into account, in order to ensure the square-integrability of the solutions \eqref{eq:gensol} for any given $z \in (-\sqrt{m^2+k^2},\sqrt{m^2+k^2}\,)$ and $x \to \pm \infty$, we must fix
\begin{equation}
	a_{+} = b_{-} = c_{+} = d_{-} = 0\,.
\end{equation}
Finally, notice that the initial equations \eqref{eq:eeini} require
\begin{equation}\label{eq:bcgen}
\begin{cases}
	i (\sqrt{k^2 + m^2 - z^2} + k) b_+(k) - (z+m) d_+(k) = 0\,, \\
	-i (\sqrt{k^2 + m^2 - z^2} - k) a_-(k) - (z+m) c_-(k) = 0\,.
\end{cases}
\end{equation}

The above arguments prove that any square-integrable solution of \eqref{eq:eigenDDket} is given by
\begin{equation}\label{eq:algsol}
	\psih(x,k) = \left[
					\theta(-x)\,a_-(k) \begin{pmatrix} 1 \\ -i\, \frac{\sqrt{k^2 + m^2 - z^2} - k}{m+z} \end{pmatrix}
					+ \theta(x)\,b_+(k) \begin{pmatrix} 1 \\ i\,\frac{\sqrt{k^2 + m^2 - z^2} + k}{m+z} \end{pmatrix}
				\right] e^{-\sqrt{k^2 + m^2 - z^2}\,\vert x\vert} \,.
\end{equation}
For any function of this form, the boundary condition \eqref{eq:bcLam}, with $\Lambda_{\eta,\tau}$ as in \eqref{eq:Lamcrit}, entails
\begin{align}\label{eq:eigenmat}
	\begin{pmatrix} \frac{\eta - \tau}{2}\, \frac{\sqrt{k^2 + m^2 - z^2} - k}{m+z} & 1 \\ \frac{i(\eta + \tau)}{2} & i\,\frac{\sqrt{k^2 + m^2 - z^2} + k}{m+z} \end{pmatrix}  \begin{pmatrix} a_-(k) \\ b_+(k) \end{pmatrix}= 0\,.
\end{align}
Of course, this equation possesses a non-trivial solution if and only if the determinant of the $2\times 2$ matrix on the left-hand side vanishes, which is ultimately equivalent to
\[ 
\eta z + \tau m = 0\,.
\]
The related solution is obtained by fixing
\[
b_{+}(k) = -\frac{\eta - \tau}{2}\, \frac{\sqrt{k^2 + m^2 - z^2} - k}{m+z}\,a_-(k)\,,
\]
which ensures, in particular, that
\[
b_+(k) \begin{pmatrix} 1 \\ i\,\frac{\sqrt{k^2 + m^2 - z^2} + k}{m+z} \end{pmatrix}
= a_{-}(k)\, \Lambda_{\eta,\tau} \begin{pmatrix} 1 \\ -i\, \frac{\sqrt{k^2 + m^2 - z^2} - k}{m+z} \end{pmatrix}.
\]

Summing up, the above arguments prove that $\zet = -\tau m/\eta$ is the only admissible eigenvalue of $\DDet$ and that the partial Fourier transform of the corresponding eigenfunctions are given by
\begin{align*}
	\psih(x,k) = \big[\theta(-x)\,\sigma_0 + \theta(x)\,\Lambda_{\eta,\tau} \big] \begin{pmatrix} 1 \\ -i\, \frac{\sqrt{k^2 + m^2 - z^2} - k}{m+z} \end{pmatrix} a_-(k)\, e^{-\sqrt{k^2 + m^2 - z^2}\,\vert x\vert}\,.
\end{align*}
The expression \eqref{eq:eignfunexp} reported in the statement of \cref{thm:mainline} is obtained by applying the inverse, partial Fourier transform $\Fou_2^{-1}$ to any such function $\psih$, noting that $(\sqrt{k^2 + m^2 - z^2} - k)(\sqrt{k^2 + m^2 - z^2} + k) = (m+z)(m-z) = m^2 (1- \tau^2/\eta^2) = 4 m^2/\eta^2$ and fixing the normalization constant as
\[
	a_{-}(k) := \sqrt{\tfrac{1}{2}\,(1 - \tfrac{\tau}{\eta})(\sqrt{k^2 + 4 m^2/\eta^2} + k)}\,\Nh(k) \,.
\]
The identity \eqref{eq:normeq} can be easily derived by Plancherel theorem and some elementary computations.
\end{proof}

The results reported in \cref{cor:eigline} follow from Plancherel's theorem and straightforward, albeit rather lengthy, calculations. We omit the details as they offer no significant insight. We only notice the elementary identity $y = \Fou_2^{-1} (i \partial_k) \Fou_2$, which yields, in particular,
\[
	\langle y \rangle_{\eta,\tau} = \langle \Fou_2 \psili, ( i \partial_k) \Fou_2 \psili\rangle\,, \qquad
	\|y \,\psili\| = \| i \partial_k \Fou_2 \psili \|\,.
\]
By similar computations, it can be easily verified that $\psili$ is \emph{not} an eigenfunction of any observable considered in \cref{cor:eigline}.
\medskip

\subsection{Proof of the statements in \cref{sec:Dcircle}}
We now proceed to discuss the case of the circle, considering $\Sigma = \Sigma_R$ as in \eqref{eq:circle}. For the ease of presentation, we split the proof of \cref{thm:maincirc} in various steps.

\begin{lemma}\label{lem:implicit}
Under the assumptions of \cref{thm:maincirc}, the eigenvalues of $\DDet$ are determined by the implicit relation \eqref{eq:evid} and the associated eigenfunctions have the form written in \eqref{eq:formk}, \eqref{eq:uvIK}, \eqref{eq:const} and \eqref{eq:akunit}.
\end{lemma}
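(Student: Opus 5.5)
We use the decomposition \eqref{eq:csum} and reduce $\DDet \psi = z\psi$ to the fiber problems $\DDet[k]\hat\psi = z\hat\psi$, one for each $k\in\Z$. By the preceding proposition, discrete eigenvalues lie in $(-m,m)$. For such $z$ we set $\kappa := \sqrt{m^2-z^2}>0$. On each of $(0,R)$ and $(R,\infty)$, the radial system from \eqref{eq:polarfiber} reads
\[
-i(\partial_r+\tfrac{k+1}{r})v = (z-m)u,\qquad -i(\partial_r-\tfrac{k}{r})u=(z+m)v .
\]
Eliminating $v$ gives the modified Bessel equation of order $k$ for $u$ in the variable $t=\kappa r$; likewise $v$ solves the one of order $k+1$.

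The fundamental solutions are $I_k(t)$ and $K_k(t)$. Square integrability with respect to $r\,dr$ near $0$, together with the domain condition $(\partial_r-\tfrac kr)u\in L^2$ (relevant for $k\le -1$, where we use $I_{-n}=I_n$, $K_{-n}=K_n$ and check which combination is regular), forces $u^+=a I_k(\kappa r)$. Decay at infinity forces $u^-=b K_k(\kappa r)$. The companion component then follows from the first-order equation together with the recurrences $I_k'-\tfrac kt I_k = I_{k+1}$ and $K_k'-\tfrac kt K_k=-K_{k+1}$. This gives exactly the form \eqref{eq:uvIK}, with the factor $-i\sqrt{(m-z)/(m+z)}$ coming from $\kappa/(m+z)$.

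Next we impose the transmission conditions \eqref{eq:bcradial}. They form a homogeneous $2\times 2$ linear system in $(a,b)$. Nontrivial solutions exist iff its determinant vanishes, and the key simplification is the Wronskian identity $I_k(t)K_{k+1}(t)+I_{k+1}(t)K_k(t)=1/t$. Expanding the determinant, the critical relation $(\eta+\tau)(\eta-\tau)=4$ should cancel the products in which only $I$'s or only $K$'s appear, each multiplied by one, leaving a relation between $I_{k+1}K_{k+1}$ and $I_kK_k$ weighted by $(m\mp z)$. This should reduce to \eqref{eq:evid}. Solving the second condition of \eqref{eq:bcradial} for $b/a$ yields $c_k$ as in \eqref{eq:const}.

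We expect this algebraic reduction to be the main obstacle: keeping track of signs and the factor $\tfrac12$ in the averages. We would first write the system as $M\binom{a}{b}=0$ with explicit entries, then multiply out and substitute the Wronskian identity.

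Finally, we compute the norm
\[
\|\psi_k\|^2=\int_0^\infty(|u_k|^2+|v_k|^2)\,r\,dr ,
\]
where the angular factor $\tfrac{1}{2\pi}$ cancels. We use the Lommel-type indefinite integrals
\[
\int t\,I_k^2\,dt=\tfrac{t^2}{2}\big(I_k^2-I_{k-1}I_{k+1}\big),\qquad \int t\,K_k^2\,dt=\tfrac{t^2}{2}\big(K_{k-1}K_{k+1}-K_k^2\big)
\]
(with appropriate signs), evaluated between $0$ and $t_k$ and between $t_k$ and $\infty$; the boundary terms at $0$ and $\infty$ vanish. After the substitution $t=\kappa r$ the prefactor becomes $R^2/2$. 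This gives \eqref{eq:akunit}, and conversely shows that every such $z$ is an eigenvalue with eigenfunction \eqref{eq:formk}.
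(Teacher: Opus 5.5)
For $z\in(-m,m)$ your argument is the paper's: Bessel reduction, $b_k^+=a_k^-=0$, vanishing of the $2\times2$ determinant, and Lommel integrals. The reduction to that interval, however, has a genuine gap. The proposition you invoke only gives $\sigma_{\mathrm{disc}}(H_{\eta,\tau})\subseteq(-m,m)$. It says nothing about eigenvalues embedded in $\sigma_{\mathrm{ess}}(H_{\eta,\tau})$, i.e.\ with $|z|\geq m$. The lemma must exclude these, and the proof of \cref{thm:maincirc}(i) relies on it for $z=\pm m$. The paper argues directly from the radial ODE at infinity, and you need an argument of that kind.

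For $|z|>m$ this is easy. On $(R,\infty)$ the solutions are combinations of $J_k,Y_k$ (resp.\ $J_{k+1},Y_{k+1}$) of $\sqrt{z^2-m^2}\,r$. These decay only like $r^{-1/2}$, so no nontrivial combination lies in $L^2((R,\infty),r\,dr)$, and hence $u_k^-=v_k^-=0$. Then \eqref{eq:bcradial} gives $v_k^+(R)=\tfrac{i}{2}(\eta+\tau)u_k^+(R)$ and $\big(1+\tfrac{\eta^2-\tau^2}{4}\big)u_k^+(R)=0$. This is zero Cauchy data at $r=R$, hence $\psi=0$.

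At $z=\pm m$ your ansatz degenerates: $\kappa=0$, and $\sqrt{(m-z)/(m+z)}$ vanishes or blows up. The solutions become elementary (powers of $r$, possibly times $\log r$), and square integrability alone no longer suffices. For $z=m$ and $k\geq2$ the exterior pair $v=c\,r^{-(k+1)}$, $u=-\tfrac{imc}{k}\,r^{-k}$ lies in $L^2((R,\infty),r\,dr)$. Matching it with the regular interior solution $u=d\,r^{k}$, $v=0$ through \eqref{eq:bcradial} yields
\[
c\left[\Big(1-\frac{\eta^2-\tau^2}{4}\Big)\frac{1}{R}+\frac{(\eta+\tau)\,m}{k}\right]=0\,.
\]
So $c=0$, and then $d=0$, because criticality removes the first term and $\eta+\tau\neq0$. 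The other values of $k$ and the threshold $z=-m$ are handled analogously. Without this step, neither the lemma as stated nor the threshold part of \cref{thm:maincirc}(i) follows.

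A smaller point on the step you flagged as the main obstacle. Every product in the determinant is of the form $I\cdot K$, since the first column contains only $I$'s and the second only $K$'s; there are no products of only $I$'s or only $K$'s. Writing $\alpha=\tfrac{\eta+\tau}{2}$, $\beta=\tfrac{\eta-\tau}{2}$, $s=\sqrt{(m-z)/(m+z)}$, the determinant equals $2\alpha I_kK_k-2\beta s^2 I_{k+1}K_{k+1}+s(1-\alpha\beta)(I_kK_{k+1}+I_{k+1}K_k)$. Thus $\alpha\beta=1$ removes the mixed-order products directly, and the Wronskian identity is not needed. Also, near $r=0$ it is $k=0$, not $k\leq-1$, where $u$ alone cannot exclude the $K$-branch, since $K_0\in L^2((0,R),r\,dr)$. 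There you use $v$, which then contains $K_1$.
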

\begin{proof}[Proof of Lemma \ref{lem:implicit}]
We look for non-trivial solutions $\psi\in\dom(\DDet)$ of the eigenvalue equation
\begin{equation}\label{eq:eveq}
    \DDet\psi=z\psi\,.
\end{equation}
Exploiting the angular momentum decomposition \eqref{eq:polarspinor}, we henceforth work at fixed angular momentum $k\in\mathbb{Z}$, considering \eqref{eq:eveq} for the corresponding fiber operator $\DDet[k]$, namely,
\begin{equation}\label{eq:eveqk}
\DDet[k]\psi_k= z\,\psi_k\,,\qquad \psi_k\in\dom(\DDet[k])\,.
\end{equation}
 
{\sl 1) Algebraic solutions.}
We first solve \eqref{eq:eveqk}, by seeking solutions $\psi_k$ of the form \eqref{eq:formk}. This amounts to performing a discrete Fourier decomposition in the angular variable for each spinor component, independently, and involves no loss of generality. With this ansatz, \eqref{eq:eveqk} reduces to the following system of first-order ODEs for the couples of radial functions $(u^{\pm}_k,v^{\pm}_k)$, to be solved separately in the regions $0<r<R$ and $r>R$:
\begin{equation}\label{eq:ode}
    \begin{cases}
       i \partial_r v^{\pm}_k+i\,\frac{k+1}{r}\,v^{\pm}_k-(m-z)u^{\pm}_k = 0\,, \\
       i \partial_r u^{\pm}_k-i\,\frac{k}{r}\,u^{\pm}_k+(m+z)v^{\pm}_k = 0\,,
    \end{cases}
    \quad \iff \qquad
    \begin{cases}
    u^{\pm}_k=\frac{i}{m-z}\left(\partial_r +\frac{k+1}{r} \right)v^{\pm}_k\,, \\
    v^{\pm}_k=\frac{i}{m+z}\left(-\partial_r +\frac{k}{r} \right) u^{\pm}_k\,.
    \end{cases}
\end{equation}
A simple substitution produces two modified Bessel equations, namely,
\[
\begin{cases}
r^2 \partial_{rr} u^{\pm}_k +r \partial_r u^{\pm}_k-(k^2+(m^2-z^2)r^2)u^{\pm}_k=0\,, \\
r^2 \partial_{rr} v^{\pm}_k +r \partial_r v^{\pm}_k -((k+1)^2+(m^2-z^2)r^2)v^{\pm}_k=0\,.
\end{cases}
\]
The general solutions of these equations are of the form, see \emph{e.g.} \cite[\S 10.25]{NIST},
\begin{equation}\label{eq:gensolc}
\begin{cases}
u^{\pm}_k(r)=a^{\pm}_k\, I_k(\sqrt{m^2-z^2}\,r)+b^{\pm}_k\, K_k(\sqrt{m^2-z^2}\,r)\,, \\
v^{\pm}_k(r)=c^{\pm}_k\, I_{k+1}(\sqrt{m^2-z^2}\,r)+ d^{\pm}_k\, K_{k+1}(\sqrt{m^2-z^2}\,r)\,,
\end{cases}
\end{equation}
where $I_\nu,K_\nu$ are the modified Bessel functions of second kind and order $\nu$, while $a^{\pm}_k,b^{\pm}_k,c^{\pm}_k,d^{\pm}_k\in\CC$ are constant coefficients. Using the recurrence relations \cite[Eq. 10.29.2]{NIST}
\[
I'_\nu=I_{\nu\pm1}\pm\frac{\nu}{r}\,I_\nu\,,\qquad K'_\nu=-K_{\nu\pm1}\pm\frac{\nu}{r}\,K_\nu\,,
\]
it can be checked that the original equations in \eqref{eq:ode} are fulfilled if and only
\begin{equation}
    \begin{cases}
       i \sqrt{m^2-z^2}\, [c^{\pm}_k I_{k} - d^{\pm}_kK_{k}]-(m-z)[a^{\pm}_k I_k + b^{\pm}_k\, K_k] = 0\,, \\
       i \sqrt{m^2-z^2}\, [a^{\pm}_k I_{k + 1} - b^{\pm}_k K_{k+1}]+(m+z)[c^{\pm}_k I_{k+1}+ d^{\pm}_k K_{k+1}] = 0\,,
    \end{cases}
\end{equation}
where we omitted the arguments of the Bessel functions for brevity. To satisfy the above relations, we must fix
\[
	c^{\pm}_k = - i\,\sqrt{\frac{m-z}{m+z}}\,a^{\pm}_k\,, \qquad 
	d^{\pm}_k = i\,\sqrt{\frac{m-z}{m+z}}\,b^{\pm}_k\,.
\]
We thus obtain
\begin{equation}\label{eq:uv}
\begin{cases}
u^{\pm}_k(r)=a^{\pm}_k\, I_k(\sqrt{m^2-z^2}\,r)+b^{\pm}_k\, K_k(\sqrt{m^2-z^2}\,r)\,, \\
v^{\pm}_k(r)=- i\,\sqrt{\frac{m-z}{m+z}}\,a^{\pm}_k\, I_{k+1}(\sqrt{m^2-z^2}\,r)+ i\,\sqrt{\frac{m-z}{m+z}}\,b^{\pm}_k\, K_{k+1}(\sqrt{m^2-z^2}\,r)\,.
\end{cases}
\end{equation}

{\sl 2) Ensuring square-integrability.}
Let us now examine the square-integrability of the solutions \eqref{eq:uv}, considering $u_+,v_+$ for $0<r<R$ and $u_-,v_-$ for $r>R$, separately. On one side, let us recall the small argument asymptotics
\begin{equation}\label{eq:sasym}
I_\nu(t)\sim \frac{1}{\Gamma(\nu+1)}\left(\frac{t}{2} \right)^{\!\vert \nu\vert} , \qquad
K_\nu(t)\sim\begin{cases}-\ln(t/2)-\gamma\,, & \mbox{if }\nu=0 \\ \frac{\Gamma(\nu)}{2}\left(\frac{t}{2} \right)^{-\vert \nu\vert}\,,& \mbox{if }\nu\neq 0 \end{cases}
\,,\qquad \mbox{as $t \to 0^+$}\,,
 \end{equation}
 where $\Gamma(\nu)$ is the Gamma function and $\gamma$ is the Euler-Mascheroni constant, see \cite[Eqs. 10.27.1-3 and \S 10.30]{NIST}. As a consequence, for any $k \in \mathbb{Z}$, $K_{k},K_{k+1}$ are never in $L^2((0,R),rdr)$ \emph{at the same time}, while this is always the case for $I_k, I_{k+1}$. In light of this, we must fix
 \[
 	b^{+}_k = 0\,.
 \]

On the other side, we recall the large argument asymptotic expansions \cite[Eqs. \S 10.30]{NIST}
\begin{equation}
I_\nu(t)\sim \frac{1}{\sqrt{2\pi t}}\,e^t\,, \qquad 
K_\nu(t)\sim \sqrt{\frac{\pi}{2t}}\, e^{-t}\,, \qquad 
\quad \mbox{as\; $t \to +\infty$}\,. \label{eq:sasyminf}
\end{equation}
Considering this, a non-trivial solution which is square-integrable at infinity can be obtained only if $z \in (-m,m)$. Incidentally, this accounts for the fact that the discrete spectrum is contained in the open gap interval. For $z \in (-m,m)$, the functions $K_k,K_{k+1}$ are simultaneously in $L^2((R,+\infty), r dr)$, while $I_k,I_{k+1}$ are not. Accordingly, we have to fix
\[
	a^{-}_k = 0\,.
\]

Summing up, square-integrable solutions of \eqref{eq:eveqk} are of the form $\psi_k = (u_k,v_k)$ with
\begin{equation}\label{eq:uvIKproof}
\begin{cases}
u_k(r) = \one_{(0,R)}(r)\, a^{+}_k\, I_k(\sqrt{m^2-z^2}\,r) + \one_{(R,+\infty)}(r)\, b^{-}_k\, K_k(\sqrt{m^2-z^2}\,r) \,, \vspace{0.1cm}\\
v_k(r) = -i\,\sqrt{\frac{m-z}{m+z}} \left[\one_{(0,R)}(r)\,a^{+}_k\, I_{k+1}(\sqrt{m^2-z^2}\,r) - \one_{(R,+\infty)}(r)\,b^{-}_k\, K_{k+1}(\sqrt{m^2-z^2}\,r)\right] ,
\end{cases}
\end{equation}
for suitable $a^{+}_k,b^{-}_k \in \CC$.

{\sl 3) The boundary conditions at $r = R$.}
For any solution of the form \eqref{eq:uvIKproof}, the boundary condition \eqref{eq:bcradial} at $r = R$ can be rephrased as:
\begin{equation}\label{eq:bcsystem}
\begin{pmatrix}
\frac{\eta+\tau}{2}I_k(t) + \sqrt{\frac{m-z}{m+z}} I_{k+1}(t)  & \frac{\eta+\tau}{2}K_k(t)+ \sqrt{\frac{m-z}{m+z}} K_{k+1}(t)  \\
\frac{\eta-\tau}{2}\sqrt{\frac{m-z}{m+z}}I_{k+1}(t)-I_k(t) & -\frac{\eta-\tau}{2}\sqrt{\frac{m-z}{m+z}}K_{k+1}(t)+K_k(t)
\end{pmatrix}_{t \,=\, \sqrt{m^2-z^2}\,R} 
\begin{pmatrix} a^{+}_k \\ b^{-}_k \end{pmatrix} = 0\,.
\end{equation}
Also in this case, a non-trivial solution exists if and only if the determinant of the square matrix on the left-hand side vanishes. This condition ultimately boils down to
\begin{equation}\label{eq:evcond}
\left[(\eta+\tau)\,I_k(t)\, K_k(t) - (\eta-\tau)\frac{m-z}{m+z}\, I_{k+1}(t)\,K_{k+1}(t)\right]_{t \,=\, \sqrt{m^2-z^2}\,R} = 0\,,
\end{equation}
which appears to be equivalent to \eqref{eq:evid}. The corresponding solution matches the choice
\begin{align*}
 b^{-}_k 
= a^{+}_k \left[\frac{\frac{\eta-\tau}{2}\sqrt{\frac{m-z}{m+z}}\,I_{k+1}(t)-I_k(t)}{\frac{\eta-\tau}{2}\sqrt{\frac{m-z}{m+z}}\,K_{k+1}(t)-K_k(t)}\right]_{t \,=\, \sqrt{m^2-z^2}\,R} .
\end{align*}
which, together with \eqref{eq:uvIKproof}, accounts for \eqref{eq:formk}, \eqref{eq:uvIK} and \eqref{eq:const}.

{\sl 4) The normalization factor.} Assume $z_k$ is a solution of \eqref{eq:evid} for a given $k \in \Z$, and consider the associated eigenfunction defined by \eqref{eq:formk}, \eqref{eq:uvIK} and \eqref{eq:const}. Requiring that $\psi_k$ has unit norm in $L^2(\R^2;\CC^2)$ amounts to
\begin{align*}
1 
& = \int_{\R_+ \times [0,2\pi)}\vert \psi_k(r,\theta)\vert^2\,r dr d\theta 
= \int_{0}^{\infty} \left(\vert u_k(r)\vert^2+\vert v_k(r)\vert^2\right)\,r dr \\
& = |a_k|^2 \bigg[ \int_{0}^{R} \left|I_k\left(\sqrt{m^2-z_k^2}\,r\right)\right|^2\! r dr + |c_{k}|^2 \int_{R}^{\infty} \left|K_k\left(\sqrt{m^2-z_k^2}\,r\right)\right|^2\! r dr \\
& \hspace{2.2cm} + \frac{m-z_k}{m+z_k} \int_{0}^{R} \left|I_{k+1}\left(\sqrt{m^2-z_k^2}\,r\right)\right|^2\!r dr + \frac{m-z_k}{m+z_k}\,|c_{k}|^2 \int_{R}^{\infty} \left|K_{k+1}\left(\sqrt{m^2-z_k^2}\,r\right)\right|^2\!r dr \bigg] \\
& = \frac{|a_k|^2}{m^2-z_k^2} \bigg[ \int_{0}^{t_k} |I_k(t)|^2\, t\, dt + |c_{k}|^2 \int_{t_k}^{\infty} |K_k(t)|^2\, t\, dt \\
& \hspace{2.2cm} + \frac{m-z_k}{m+z_k} \int_{0}^{t_k} |I_{k+1}(t)|^2\, t\, dt + \frac{m-z_k}{m+z_k}\,|c_{k}|^2 \int_{t_k}^{\infty} |K_{k+1}(t)|^2\, t\, dt \bigg]_{t_k \,=\,\sqrt{m^2-z_k^2}\,R}.
\end{align*}
Now, let us notice that the Bessel functions $I_\nu,K_\nu$ are positive definite \cite[\S 10.37]{NIST} and recall the following relation, where $Z_\nu$ denotes an arbitrary modified Bessel function of order $\nu$ \cite[5.54.2]{GR07}:
\[
\int \,[Z_\nu(t)]^2\, t\, dt =\frac{t^2}{2}\, \Big[ Z_\nu^2(t) - Z_{\nu-1}(t)Z_{\nu+1}(t) \Big]\,.
\]
From here and from the previously mentioned asymptotic expansions \eqref{eq:sasym}, \eqref{eq:sasyminf}, we infer
\begin{multline*}
1 
= \frac{R^2}{2}\, |a_k|^2\, \bigg[
I_k^2(t_k) - I_{k-1}(t_k)I_{k+1}(t_k)
- |c_{k}|^2 \Big(K_k^2(t_k) - K_{k-1}(t_k) K_{k+1}(t_k) \Big)
 \\
+ \frac{m-z_k}{m+z_k} \left[ I_{k+1}^2(t_k) - I_{k}(t_k R)I_{k+2}(t_k)
- |c_{k}|^2 \Big( K_{k+1}^2(t_k) - K_{k}(t_k)K_{k+2}(t_k) \Big)\right]
 \bigg]_{t_k \,=\,\sqrt{m^2-z_k^2}\,R}\,,
\end{multline*}
which motivates the choice of $a_k$ reported in \eqref{eq:akunit}.
\end{proof}

\begin{lemma}\label{lem:noev}
Under the assumptions of \cref{thm:maincirc}, the spectral point
\[
\zet=-\frac{\tau}{\eta}\, m\in\sigma_{\mathrm{ess}}(\DDet)\,,
\]
is \emph{not} an eigenvalue.
\end{lemma}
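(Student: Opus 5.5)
The plan is to reduce the question to the fibers and then use the explicit characterization from \cref{lem:implicit}. By the decomposition \eqref{eq:csum}, $\zet$ is an eigenvalue of $\DDet$ if and only if it is an eigenvalue of $\DDet[k]$ for some $k\in\Z$. Indeed, if $\DDet\psi=\zet\psi$ with $\psi\neq0$, then at least one angular component of $\psi$ in \eqref{eq:polarspinor} is a nontrivial eigenfunction of the corresponding fiber. By \cref{lem:implicit}, this happens exactly when $z=\zet$ solves \eqref{eq:evcond} for that $k$. So it suffices to show that \eqref{eq:evcond} fails at $z=\zet$ for every $k\in\Z$.

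\emph{Reduction.} At $z=\zet=-\tau m/\eta$ one has
\[
\frac{m-\zet}{m+\zet}=\frac{\eta+\tau}{\eta-\tau},
\qquad
m^2-\zet^2=\frac{4m^2}{\eta^2},
\]
where the second identity uses the critical condition $\eta^2-\tau^2=4$. Substituting into \eqref{eq:evcond}, the condition becomes
\[
(\eta+\tau)\big[I_k(t_*)K_k(t_*)-I_{k+1}(t_*)K_{k+1}(t_*)\big]=0,
\qquad t_*=\frac{2mR}{|\eta|}>0 .
\]
Since $\eta^2-\tau^2=4$ forces $\eta+\tau\neq0$, everything reduces to proving that
\[
P_k(t):=I_k(t)K_k(t)\neq I_{k+1}(t)K_{k+1}(t)\quad\text{for all } k\in\Z,\ t>0 .
\]

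\emph{Symmetrization.} For integer order, $I_{-n}=I_n$ and $K_{-n}=K_n$. Hence for $k\le -1$ the pair $(k,k+1)$ corresponds to the pair $(|k|-1,|k|)$ of nonnegative orders. It is therefore enough to show that the sequence $n\mapsto P_n(t)$ is strictly decreasing on $n\in\N_0$ for every fixed $t>0$.

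\emph{Main step: strict monotonicity in the order.} This is the one nontrivial ingredient. My first attempt would be to use an integral representation of the product $I_\nu(t)K_\nu(t)$ whose integrand is strictly decreasing in $\nu$. Candidates are the Nicholson/Dixon--Ferrar type formulas, for instance $I_\nu K_\nu$ written as a $\cosh$/$e^{-2\nu s}$-weighted integral of $K_0$ or $J_0$ (see \cite[\S 10.32]{NIST}). I would have to check which of these has a positive integrand so that strict monotonicity can be read off directly.

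As a fallback I would argue via the Wronskian identity
\[
I_\nu K_{\nu+1}+I_{\nu+1}K_\nu=\frac1t
\]
together with the ratio bounds
\[
\frac{I_{\nu+1}}{I_\nu}<\frac{t}{\nu+\frac12+\sqrt{(\nu+\frac12)^2+t^2}}<\frac{K_\nu}{K_{\nu+1}},
\]
which are standard Amos-type inequalities. Writing
\[
\frac{P_{\nu+1}}{P_\nu}=\frac{I_{\nu+1}}{I_\nu}\cdot\frac{K_{\nu+1}}{K_\nu},
\]
these bounds give $P_{\nu+1}/P_\nu<1$, i.e.\ $P_{\nu+1}<P_\nu$. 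Alternatively, I could cite the known monotonicity of $\nu\mapsto I_\nu(t)K_\nu(t)$ for $\nu\ge -\tfrac12$ from the literature (Baricz, Penfold et al.).

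Once strict decrease is in hand, $P_k(t_*)\neq P_{k+1}(t_*)$ holds for every $k\in\Z$. Hence \eqref{eq:evcond} has no solution at $z=\zet$ in any fiber, and $\zet$ is not an eigenvalue of $\DDet$.
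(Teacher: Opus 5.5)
Your proposal is correct and follows the paper's proof. The paper likewise reduces \eqref{eq:evid} at $z=\zet$ to $I_{k+1}(t)K_{k+1}(t)=I_k(t)K_k(t)$, and rules this out for every $k\in\Z$ using the strict inequality $I_{n+1}(t)/I_n(t)<K_n(t)/K_{n+1}(t)$ for $n\geq 0$ (cited from Segura) together with the connection formulas $I_{-n}=I_n$, $K_{-n}=K_n$. Your fallback argument, which sandwiches both ratios around $t/\big(\nu+\tfrac12+\sqrt{(\nu+\tfrac12)^2+t^2}\big)$, gives exactly that inequality, so you need neither the integral-representation route nor the Wronskian identity.
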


\begin{proof}
We argue by contradiction, assuming that $\zet=-\frac{\tau}{\eta}\, m$ is an eigenvalue of $\DDet$. As a consequence of the preceding \cref{lem:implicit}, the identity \eqref{eq:evid} must then hold with $z = \zet$. Since
\[
\frac{(\eta - \tau)(m-\zet)}{(\eta + \tau)(m+\zet)} = 1\,,
\]
in the case under analysis the cited identity reduces to 
\begin{equation}\label{eq:idbessel}
\big[I_{k+1}(t)K_{k+1}(t)-I_{k}(t)K_{k}(t)\big]_{t\,=\,\sqrt{m^2-\zet^2}\, R} = 0\,.
\end{equation}
On the other hand, the {\sl strict} inequality $I_{k+1}(t)/I_k(t) < K_{k}(t)/K_{k+1}(t)$ for all $k \geqslant 0$ and $t > 0$ \cite[Thm. 1]{Se11}, together with the connection formulas \cite[Eq. 10.27.1 and 10.27.3]{NIST}
\begin{equation}\label{eq:IKconn}
	I_{k}(t) = I_{-k}(t)\,, \qquad K_{k}(t) = K_{-k}(t)\,, \qquad \mbox{for $k \in \Z$}\,,
\end{equation}
imply
\begin{align*}
I_{k+1}(t)K_{k+1}(t) - I_{k}(t)K_{k}(t) < 0\,, \qquad & \mbox{for all $k \in \N$ and $t > 0$}\,, \\
I_{k+1}(t)K_{k+1}(t) - I_{k}(t)K_{k}(t) > 0\,, \qquad & \mbox{for all $k \in \Z \setminus \N$ and $t > 0$}\,.
\end{align*}
In both cases, the equality \eqref{eq:idbessel} fails to hold. This contradiction concludes the proof.
\end{proof}

Let us now report a technical result, which will be later employed to characterize the solutions $z_k \in (-m,m)$ of the implicit relation \eqref{eq:evid}.

	\begin{proposition}\label{prop:fg}
		Let $a > 0$ and $p \in (-1,1)$. For $t \in (0,a)$ and $k \in \Z$, consider the functions defined as
		\begin{equation}\label{eq:fg}
			f_k(t):=\frac{I_{k+1}(t)K_{k+1}(t)}{I_{k}(t)K_{k}(t)}\,,\qquad
			g_{p,\pm}(t):= \frac{1-p}{1+p}\;\frac{1\pm\sqrt{1-(t/a)^2}}{1\mp\sqrt{1-(t/a)^2}} \,.
		\end{equation}
		Then, there exists $n_0 > 0$ such that, for every $k\in\Z$ with $|k| \geqslant n_0$, 
		
			there exists $t_k \in[0,a)$
		satisfying
		\begin{equation}
			f_k(t_k)= \begin{cases}
				g_{p,-}(t_k) & \mbox{if\; $p \in (-1,0]$}\,,\\
				g_{p,+}(t_k) & \mbox{if\; $p \in (0,1)$}\,, 
			\end{cases}
		\end{equation}
		Moreover,
		\begin{equation}\label{eq:conv}
			\lim_{k\to \pm \infty} t_k = a\,\sqrt{1-p^2}\,.
		\end{equation}
	\end{proposition}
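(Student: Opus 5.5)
The plan is to reduce the claim to the intermediate value theorem. The two ingredients are that $f_k\to 1$ uniformly on $[0,a]$ as $|k|\to\infty$, and that $g_{p,\pm}$ cross the level $1$ transversally at exactly $t_*:=a\sqrt{1-p^2}$.

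\emph{Step 1: the comparison functions.} Set $s=\sqrt{1-(t/a)^2}\in(0,1]$, which is decreasing in $t$. Then $g_{p,-}=\frac{1-p}{1+p}\,\frac{1-s}{1+s}$ is continuous and strictly increasing on $(0,a]$, with $g_{p,-}(0^+)=0$ and $g_{p,-}(a)=\frac{1-p}{1+p}$. Likewise $g_{p,+}=\frac{1-p}{1+p}\,\frac{1+s}{1-s}$ is strictly decreasing, with $g_{p,+}(0^+)=+\infty$ and $g_{p,+}(a)=\frac{1-p}{1+p}$. Solving $g_{p,\mp}=1$ gives $s=\mp p$. So $g_{p,-}$ attains the value $1$ only if $p\leq 0$, and $g_{p,+}$ only if $p\geq 0$; in both cases this happens at $t=t_*$, where the crossing is strict (monotonicity). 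This explains the case split in the statement.

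\emph{Step 2: uniform convergence $f_k\to1$ (main obstacle).} By the connection formulas \eqref{eq:IKconn} it suffices to control $P_\nu(t):=I_\nu(t)K_\nu(t)$ for $\nu\in\N$. I would invoke known two-sided bounds, e.g. (Baricz, Segura)
\[
\frac{1}{2\sqrt{t^2+(\nu+1/2)^2}}\;\le\; P_\nu(t)\;\le\;\frac{1}{2\sqrt{t^2+\nu^2}}\,, \qquad \nu\geq 1/2 \,\,(\text{or } \nu\ge 1).
\]
Alternatively one can use Debye's uniform asymptotic expansion, $P_\nu(t)=\frac{1}{2\sqrt{\nu^2+t^2}}\,(1+O(\nu^{-2}))$ uniformly in $t>0$. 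Either way the ratio $f_k=P_{k+1}/P_k$ satisfies $\sup_{t\in[0,a]}|f_k(t)-1|=O(1/|k|)$. In addition, the strict inequality of \cite[Thm. 1]{Se11} used in the proof of \cref{lem:noev} gives $f_k<1$ for $k\ge0$ and $f_k>1$ for $k\le -1$.

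\emph{Step 3: IVT when $p\neq0$.} Take $p<0$ and $\varepsilon>0$ small. Since $g_{p,-}$ is strictly increasing through $1$ at $t_*\in(0,a)$, there is $\delta>0$ with $g_{p,-}(t_*-\varepsilon)<1-\delta$ and $g_{p,-}(t_*+\varepsilon)>1+\delta$. Choose $n_0$ so that $|f_k-1|<\delta$ on $[0,a]$ for $|k|\ge n_0$. Then $g_{p,-}-f_k$ changes sign on $[t_*-\varepsilon,t_*+\varepsilon]$, and the IVT yields $t_k$ there. Letting $\varepsilon\to0$ (with $n_0=n_0(\varepsilon)$) gives \eqref{eq:conv}. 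The case $p\in(0,1)$ is identical with $g_{p,+}$, which is decreasing.

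\emph{Step 4: the degenerate case $p=0$.} Here $t_*=a$ is an endpoint, so the argument needs care, and I expect the statement has to be read with the sign adapted to $k$. For $k\ge n_0$ I would use $g_{0,-}$. It satisfies $g_{0,-}(0^+)=0<f_k(0^+)$, while $g_{0,-}(a)=1>f_k(a)$ by Segura's strict inequality, so a root $t_k\in(0,a)$ exists. Moreover, $g_{0,-}\le 1-\delta$ on $[0,a-\varepsilon]$ together with $f_k\to1$ uniformly forces $t_k>a-\varepsilon$, hence $t_k\to a$.

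For $k\le-n_0$, however, $f_k>1\ge g_{0,-}$, so there is no root with $g_{0,-}$. I would instead use $g_{0,+}$: it satisfies $g_{0,+}(0^+)=+\infty>f_k$ and $g_{0,+}(a)=1<f_k(a)$, and the same squeezing argument gives $t_k\to a$. This matches the symmetry $z\mapsto -z$, $k\mapsto -k-1$ of \eqref{eq:evid} when $\tau=0$. I would state this sign choice explicitly when writing the proof.
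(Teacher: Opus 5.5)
Your proof is correct. It follows a different route from the paper's and is, in one respect, more accurate.

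\textbf{Comparison of routes.} The paper first reduces to $k=n\ge0$ via $f_{-k-1}=1/f_k$ and $g_{-p,\mp}=1/g_{p,\pm}$. It then shows that $f_n$ is strictly increasing, using a derivative computation, Baricz's Tur\'an-type inequalities and Segura's ratio inequality, so that $\frac{n}{n+1}<f_n<1$ on $(0,\infty)$. Existence follows from the IVT using only the endpoint values of the monotone $g_{p,\pm}$. Convergence follows from $g_{p,\pm}(t_n)=f_n(t_n)\to1$ and continuity of $g_{p,\pm}^{-1}$.

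You use only $\sup_{[0,a]}|f_k-1|\to0$ and the transversal crossing of $g_{p,\pm}$ through the level $1$ at $t_*$. For $p\neq0$ this has three advantages:
\begin{enumerate}
\item it treats both signs of $k$ at once;
\item it needs neither the reduction nor any monotonicity of $f_k$;
\item it shows that for large $|k|$ \emph{every} root lies within $\varepsilon$ of $t_*$.
\end{enumerate}
State the third point explicitly in Step 3, since it is what makes ``letting $\varepsilon\to0$'' legitimate for an arbitrary choice of root.

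What the paper's monotonicity gives in exchange is an explicit $n_0$ for $p>0$. It also gives uniqueness of the intersection whenever $f_k$ and the relevant $g_{p,\pm}$ have opposite monotonicity (e.g.\ $p>0$, $k\ge0$), which the paper later invokes for simplicity.

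\textbf{One correction in Step 2.} The global bound $I_\nu K_\nu\le\frac{1}{2\sqrt{t^2+\nu^2}}$ is false for large $t$ at every fixed $\nu$, since $I_\nu(t)K_\nu(t)=\frac{1}{2t}\bigl(1-\frac{4\nu^2-1}{8t^2}+O(t^{-4})\bigr)$. Rely on the Debye expansion instead; it does give $\sup_{t>0}|f_k-1|=O(1/|k|)$.

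\textbf{Step 4 is a genuine correction.} As written, the proposition is false for $p=0$ and $k\le-1$. There $f_k=1/f_{-k-1}>1>g_{0,-}$ on $(0,a)$, and $f_k(0^+)>0=g_{0,-}(0^+)$, so no $t_k\in[0,a)$ exists.

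The paper's reduction to $k\ge0$ hides this. The inversion $g\mapsto1/g$ turns $g_{0,-}$ into $g_{0,+}$, which for $p=0$ is not the branch the proof handles, and $f_n=g_{0,+}$ has no solution for $n\ge0$.

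Your choice of $g_{0,+}$ for $k\le-n_0$ is the correct statement. In the proof of \cref{thm:maincirc} with $\tau=0$ it yields $z_k<0$ for $k\ge0$ and $z_k>0$ for $k\le-1$. This is consistent with $z_k=-z_{-k-1}$ and \cref{cor:symev}.
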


	\begin{figure}[t!]
    \begin{minipage}[b]{0.45\textwidth}
        \centering
        \includegraphics[width=\textwidth]{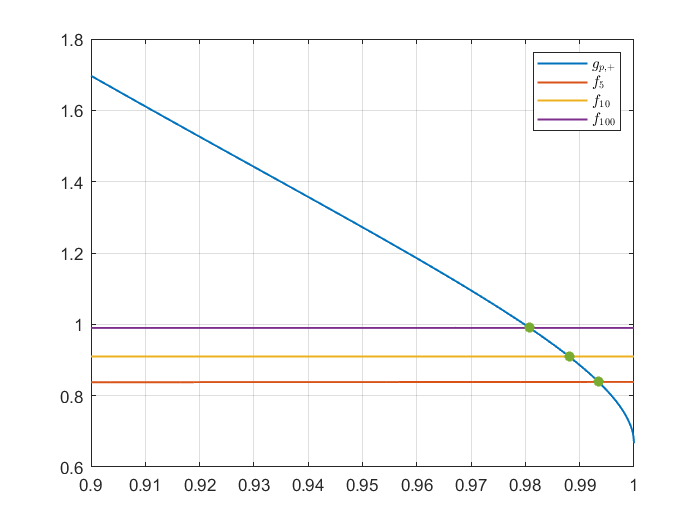}
    \end{minipage}
    \begin{minipage}[b]{0.45\textwidth}
        \centering
        \includegraphics[width=\textwidth]{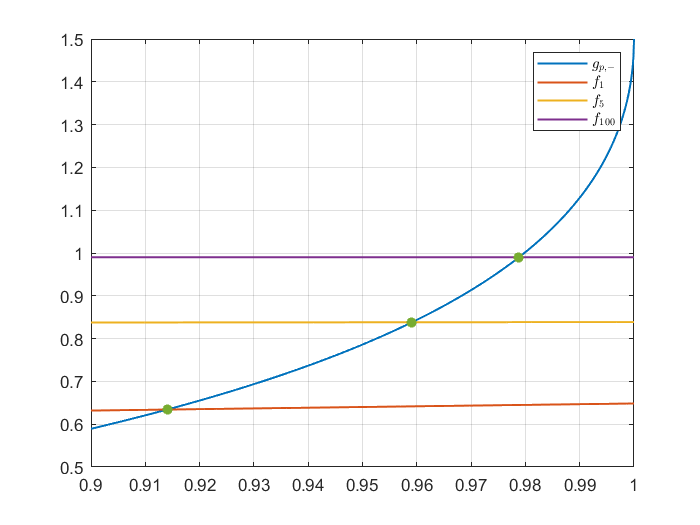}
    \end{minipage}
    \caption{
    			Plots of the functions $f_k,g_{p,\pm}$ introduced in \cref{prop:fg}.
			The left panel displays $f_k$, for $k = 5,10,100$, together with $g_{p,+}$, for $a=1$, $p = 0.2$.
			The right panel shows $f_k$, for $k = 1,5,100$, together with $g_{p,-}$, for $a=1$, $p = -0.2$.
			Green dots mark the intersection points, where $f_k = g_{p,\pm}$.
		}
    \label{fig:fkgp}
	\end{figure}

\begin{proof}
As a preliminary step, we notice that the connection formulas \eqref{eq:IKconn} imply
\[
f_{-k-1}(t) = \frac{1}{f_{k}(t)} \qquad \mbox{for all $k \in \Z$}\,.
\]
Furthermore, it is straightforward to verify that
\[
g_{-p,\mp}(t) = \frac{1}{g_{p,\pm}(t)} \qquad \mbox{for all $p \in (-1,1)$}\,.
\]
On account of these identities, it suffices to prove the thesis for $k \geqslant 0$, since the case $k \leqslant -1$ then follows straightforwardly. So, we henceforth fix
\[
k = n \in \N\,.
\]

Let us establish some basic properties of the function $f_n$ defined in \eqref{eq:fg}.
\begin{enumerate}[1)]

	\item Considering that the Bessel functions $I_n(t),K_n(t)$ are positive definite for $n \in \N$ and $t > 0$, and recalling the previously mentioned inequality $I_{n+1}(t)/I_n(t) < K_{n}(t)/K_{n+1}(t)$, we readily obtain
		\[
			0<f_n(t)<1\,,\qquad \mbox{for all $t > 0$}\,.
		\]

	\item Using the asymptotics \eqref{eq:sasym} \eqref{eq:sasyminf}, together with the basic identity $\Gamma(n+1)=n!$ \cite[Eq. 5.4.1]{NIST}, we infer
		\[
			\lim_{t \to 0^+} f_n(t)=\frac{n}{n+1} \,, \qquad 
			\lim_{t \to +\infty} f_n(t) = 1\,.
		\] 

	\item Using the identities \cite[Eq. 10.29.4]{NIST}
		\[
			\frac{1}{t}\frac{d}{dt}\big[t^{-n} I_n(t) \big] = t^{-n-1} I_{n+1}(t)\,,\quad 
			\frac{1}{t}\frac{d}{dt}\big[t^{n} K_n(t) \big] = - t^{n-1} K_{n-1}(t)\,,
		\]
		by elementary calculations we obtain
		\begin{align*}
			\frac{d}{dt}f_n&=\frac{d}{dt}\left[\frac{(t^{-(n+1)}I_{n+1})\,(t^{n+1}K_{n+1})}{(t^{-n}I_n)\,(t^n K_n)} \right] \\
& =\frac{1}{I^2_n\,K^2_n}\,\Big[\Big(I_n\, I_{n+2} - I^2_{n+1}\Big)\, K_n\, K_{n+1} + \Big(K_{n-1}\,K_{n+1} - K^2_n\Big)\,I_n\, I_{n+1} \Big]\,,
		\end{align*}
		where we omitted the arguments of the Bessel functions for notation convenience.
		Then, exploiting the Tur\'an type inequalities \cite[Thms. 1,2 and 4]{Ba15}
		\begin{align*}
			I_n(t) I_{n+2}(t)-I^2_{n+1}(t) & \geq - \frac{I^2_{n+1}(t)}{\sqrt{t^2+(n+1)^2-1/4}}\,, \\
			K_{n-1}(t)K_{n+1}(t)-K^2_{n}(t)&\geq \frac{K^2_{n}(t)}{\sqrt{t^2+\max(0,n^2-1/4)}}\,,
		\end{align*}
		and using once more that $I_{n+1}(t)/I_n(t) < K_n(t)/K_{n+1}(t)$, we obtain
		\begin{align*}
			\frac{d}{dt}f_n(t)
			& \geqslant \frac{I_{n+1}K_n}{I^2_n K^2_n} \left[-\frac{I_{n+1} K_{n+1}}{\sqrt{t^2+(n+1)^2-1/4}} + \frac{I_n K_n}{\sqrt{t^2+\max(0,n^2-1/4)}} \right] \\
			& \geqslant \frac{I_{n+1}K_n}{I^2_n K^2_n}\, I_{n+1}K_{n+1} \left[\frac{1}{\sqrt{t^2+\max(0,n^2-1/4)}}-\frac{1}{\sqrt{t^2+(n+1)^2-1/4}} \right] > 0 \,.
		\end{align*}
		This shows that $f_n$ is strictly monotonically increasing.
\end{enumerate} 
In particular, combining items $ii)$ and $iii)$ above, we obtain
\[
\lim_{n\to+\infty}\sup_{t\in[0,+\infty)}\vert 1-f_n(t)\vert =\lim_{n\to+\infty} \vert 1- f_n(0^+)\vert 
= \lim_{n\to+\infty}\left\vert 1- \frac{n}{n+1}\right\vert=0\,,
\]
which proves that
\begin{equation}\label{eq:fkconv}
f_n \to 1\,,\qquad \mbox{as $n\to+\infty$}\,,
\end{equation}
and the convergence is uniform on any compact subset of $[0,+\infty)$.
\smallskip

We now proceed to examine the functions $g_{p,\pm}$, defined in \eqref{eq:fg}.
\\
Let us first assume $p \leqslant 0$. Then, we have
\[
\lim_{t \to 0^+} g_{p,-}(t) = 0 \leqslant \frac{n}{n+1}=\inf_{t \in[0,a)}f_n(t)\,, \qquad \mbox{for all $n \in \N$}\,,
\]
and 
\[
	\lim_{t \to a^-} g_{p,-}(t) = \frac{1+|p|}{1-|p|} \geqslant 1 > \sup_{t\in[0,a)}f_n(t)\,.
\]
Furthermore, it can be easily checked that $g'_{p,-} > 0$, which ensures that $g_{p,-}$ is strictly monotonically increasing.
Together with the continuity of $g_{p,-}$ and the previously established features of $f_n$, this implies that for each $n \in \N$ there exists $t_n \in [0,a)$
such that
\[
f_n(t_n) = g_{p,-}(t_n)\,.
\]
	Since both $f_n$ and $g_{p,-}$ are increasing functions, multiple intersections may occur depending on their concavity properties; we do not investigate this issue further. On the other hand,
building on \eqref{eq:fkconv}, we infer that
\[
1 = \lim_{n\to+\infty}f_n(t_n) = \lim_{n \to +\infty}g_{p,-}(t_n) = g_{p,-}\!\left(\lim_{n\to+\infty} t_n\right) .
\]
Observe that the sequence $\{t_n\}_{n\in\N}$ admits a limit $t_\infty$ because this is the case for $\{g_{p,-}(t_n)\}_{n \in \N}$ and $g_{p,-}$ is a monotone function. In particular,
\[
g_{p,-}(t_\infty) = 1 \qquad \iff \qquad t_\infty = a\,\sqrt{1-p^2} \,.
\]

The case $p > 0$ follows by similar arguments, considering the monotone decreasing function $g_{p,+}$ and observing that
\[
\lim_{t \to 0^+} g_{p,+}(t)=+\infty \,,\qquad 
\lim_{t \to a^-} g_{p,+}(t) = \frac{1-p}{1+p} < 1\,.
\]
In particular, we have
\[
\lim_{t \to a^-} g_{p,+}(t) < \frac{n}{n+1} = \inf_{t \in[0,a)}f_n(t)\,, \qquad
\mbox{for any $n \in \N$ with $n > n_0 := \left\lceil \frac{1}{2}\!\left(\frac{1}{p}- 1\right) \right\rceil$}\,,
\]
which yields the thesis. In this case, the intersection point $t_n$ is unique (whenever it exists), since $g_{p,+}$ is decreasing whereas the function $f_n$ is increasing.
\end{proof}

We can now prove \cref{thm:maincirc}.

\begin{proof}[Proof of \cref{thm:maincirc}]
\cref{lem:implicit} accounts for all the stated claims except for the existence of solutions to \eqref{eq:evid} and the accumulation property \eqref{eq:zacc}. Both of these remaining points follow from \cref{prop:fg}, as explained below.

For $z \in (-m,m)$, we set 
\[
t := \sqrt{m^2-z^2}\,R \in [0,m R)\,,
\]
so that
\[
z = \pm m\, \sqrt{1-\left(\frac{t}{mR} \right)^2}\,.
\]
With this change of variable, \eqref{eq:evid} can be recast in the form
\begin{equation*}
\frac{I_{k+1}(t)K_{k+1}(t)}{I_{k}(t)K_{k}(t)} 
= \frac{1+\frac{\tau}{\eta}}{1-\frac{\tau}{\eta}}\,\frac{1 \pm \sqrt{1-\left(\frac{t}{mR} \right)^2}}{1 \mp \sqrt{1-\left(\frac{t}{mR} \right)^2}}\,.
\end{equation*}
Then, the thesis follows by a straightforward application of \cref{prop:fg}, here employed with $p = \zet/m = - \tau/\eta \in (-1,1)$ and $a = m R > 0$.
\end{proof}

Let us now derive the asymptotic properties outlined in \cref{cor:asef}.

\begin{proof}[Proof of \cref{cor:asef}]
We first examine the behavior of the eigenvalues $\{z_k\}_{k \in \Z}$ as $k \to \pm \infty$. To this purpose, it is convenient to rephrase the defining equation \eqref{eq:evid} as
\begin{equation}\label{eq:W0}
\left[\frac{I_{k+1}(t)\, K_{k+1}(t)}{I_{k}(t)\, K_{k}(t)}\right]_{t \,=\, \sqrt{1-(\frac{z_k}{m})^2}\,m R} = \frac{(1+\frac{\tau}{\eta})(1+ \frac{z_k}{m})}{(1-\frac{\tau}{\eta})(1 - \frac{z_k}{m})} \,.
\end{equation}
From \eqref{eq:zacc}, we immediately deduce that
\begin{equation*}
	\left(1-\frac{\tau}{\eta}\right) \lim_{k \to \pm \infty}\! \left(1 - \frac{z_k}{m}\right)
	= \left(1+\frac{\tau}{\eta}\right) \lim_{k \to \pm \infty}\! \left(1 + \frac{z_k}{m}\right)
	= \lim_{k \to \pm \infty}\! \left(1 - \left(\frac{z_k}{m}\right)^2\right) 
	= \frac{4}{\eta^2}\,.
\end{equation*}
On the other side, let us recall the uniform asymptotic expansions of the Bessel functions $I_k(t),K_k(t)$ for $t > 0$ ranging in a compact set and $k \to +\infty$, see \cite{SH11} or \cite[\S 10.41(ii)]{NIST}:
\begin{align}
	I_k(t) &= \frac{1}{\sqrt{2\pi k}}\!\left(\frac{e t}{2k}\right)^{\!k} \left[ 1 - \frac{1\!-\!3t^2}{12\, k} + \frac{1 \!-\! 78 t^2 \!+\! 9 t^4}{288\, k^2} + \frac{139 \!+\! 14085 t^2 \!-\! 4995 t^4 \!+\! 135 t^6}{51840\, k^3} + O\!\left(\frac{1}{k^4}\right) \right] ; \label{eq:Ikasy}\\
	K_k(t) &= \sqrt{\frac{\pi}{2 k}}\! \left(\frac{e t}{2k}\right)^{\!-k} \left[  1 + \frac{1\!-\!3t^2}{12\, k} + \frac{1 \!-\! 78 t^2 \!+\! 9 t^4}{288\, k^2} - \frac{139 \!+\!14085 t^2 \!-\! 4995 t^4 \!+\! 135 t^6}{51840\, k^3} + O\!\left(\frac{1}{k^4}\right) \right] . \label{eq:Kkasy}
\end{align}
Using these relations, together with the connection formulas \eqref{eq:IKconn} when $k \leqslant -1$, a direct computation yields
\begin{equation}
	\frac{I_{k+1}(t)\, K_{k+1}(t)}{I_{k}(t)\, K_{k}(t)} = 1 - \frac{1}{k} + \frac{1}{k^2} - \frac{1-4t^2}{k^3} + O\!\left(\frac{1}{k^4}\right)\,, \qquad \mbox{as $k \to \pm \infty$}\,. \label{eq:asyfrac}
\end{equation}
We now assume that $z_k$ admits an asymptotic expansion in integer powers of $1/k$, namely,
\begin{equation}
		z_k = \sum_{\ell = 0}^{3} \zeta^{\pm}_\ell\,\frac{1}{k^\ell} + O\!\left(\frac{1}{k^4}\right), \qquad \mbox{as $k \to \pm \infty$}\,,\label{eq:zkser}
\end{equation}
for suitable coefficients $\zeta_\ell \in \R$. Substituting the above expansions \eqref{eq:asyfrac} \eqref{eq:zkser} in \eqref{eq:W0} and matching powers of $1/k$, we can solve iteratively for the coefficients $\zeta_\ell$. This procedure yields \eqref{eq:evas}.
\smallskip 

We now turn to the normalized eigenfunctions $\psi_k$ defined by \eqref{eq:formk}, \eqref{eq:uvIK}, \eqref{eq:const} and \eqref{eq:akunit}.
Let us observe that the expansions \eqref{eq:Ikasy}\eqref{eq:Kkasy} imply, in particular,
\begin{eqnarray*}
& I_{k + 1}(t) \sim \left(\frac{t}{2|k|}\right)^{\pm 1}\! I_k(t)\,,\qquad K_{k + 1}(t)\sim \left(\frac{2|k|}{t}\right)^{\pm 1}\! K_k(t)\,, \\
& I_k^2(t_k) - I_{k-1}(t_k)I_{k+1}(t_k) \sim \frac{1}{2 \pi |k|^2}\,e^{\mp\tau/\eta} \left(\frac{|\eta|\,|k|}{e m R}\right)^{\!-2|k|} , \\
& K_{k-1}(t_k) K_{k+1}(t_k) - K^2_k(t_k) \sim \frac{\pi}{2 |k|^2}\,e^{\pm \tau/\eta} \left(\frac{|\eta|\, |k|}{e m R}\right)^{\!2|k|} ,
\end{eqnarray*}
as\, $k \to \pm\infty$. Using these relations, together with $\eta^2-\tau^2 = 4$ and $z_k/m \sim -\tau/\eta$ for $k \to \pm \infty$, we infer that the coefficients $c_{k}$ and $a_k$, defined in \eqref{eq:const} and \eqref{eq:akunit}, respectively, satisfy the following:
\begin{gather*}
c_{k} 
\sim \left[\frac{\sgn(\eta) \,I_{k+1}(t) - I_k(t)}{\sgn(\eta)\,K_{k+1}(t)-K_k(t)}\right]_{t\,=\,2mR/|\eta|}
\sim \mp\,\sgn(\eta) \,\frac{e^{\mp 1}}{\pi} \left(\frac{e m R}{|\eta|\, |k|}\right)^{\!2|k|\pm 1};
\\
a_k \sim \sqrt{2\pi}\,L_{\eta,\tau,k}\,|k| \left(\frac{|\eta|\,|k|}{e m R}\right)^{\!|k|}\,,
\end{gather*}
where we put, for brevity,
\[
L_{\eta,\tau,k} := \frac{\sqrt{2}}{R}\left[ e^{-\tau/\eta} + \frac{\eta+\tau}{\eta-\tau}\, e^{\tau/\eta} \right]^{-1/2}\times \begin{cases}
1 \,, & \mbox{for $k \to + \infty$}\,, \\
\frac{m R}{|\eta|\,|k|} \,, & \mbox{for $k \to - \infty$}\,.
\end{cases}
\]
Plugging the above asymptotics into the expression \eqref{eq:uvIK}, for $k \to \pm \infty$ we get
\begin{align*}
\begin{cases}
u_k(r) \sim L_{\eta,\tau,k} \,\sqrt{|k|}
\left[\one_{(0,R)}(r)\, \!\left(\frac{r}{R}\right)^{\!|k|}
\mp \sgn(\eta) \,\one_{(R,+\infty)}(r) \left(\frac{|\eta|\, |k|}{m R}\right)^{\!\mp 1}\! \left(\frac{R}{r}\right)^{\!|k|}
\right] ,\vspace{0.1cm} \\
v_k(r) \sim -i\, L_{\eta,\tau,k}\,\sqrt{|k|}\,\sqrt{\frac{\eta+\tau}{\eta-\tau}} \left[
\one_{(0,R)}(r)  \left(\frac{m R}{|\eta| |k|}\right)^{\!\pm 1}\! \left(\frac{r}{R}\right)^{\!|k|\pm 1} 
 \pm\sgn(\eta)\, \one_{(R,+\infty)}(r) \left(\frac{R}{r}\right)^{\!|k| \pm 1 } \right] .
 \end{cases}
\end{align*}
Considering the exponential decay of the ratios $(r/R)^{|k|}$ and $(R/r)^{|k|}$ for $r<R$ and $r>R$, respectively, the above expansions ultimately prove that $\psi_k(r,\theta)$ vanishes pointwisely for any fixed $r \in (0,+\infty) \setminus \{R\}$ as $k \to \pm \infty$.
\end{proof}

We now conclude with the proof of \cref{cor:symev}.

\begin{proof}[Proof of \cref{cor:symev}]
Consider the bounded operators $\Con$ and $\UU$ defined in \eqref{eq:cc} and \eqref{eq:U}, respectively. We establish hereafter the identity \eqref{eq:CUH}, from which the symmetry statement in \eqref{eq:symz} follows immediately.

To this end, we need to examine the action of $\Con$ and $\UU$ on the boundary conditions characterizing the domain of $\DDet$, see \eqref{eq:Ddom}, and on the underlying differential expression, see \eqref{eq:Daction}. In what follows, we denote by $\psi = \psi_+ \oplus \psi_-$ a generic spinor in $\dom(H_{\eta,0})$.

On one hand, noting that $\sigma_1$ has real entries, while $\sigma_2$ is purely imaginary, and that $\sigma_2 \sigma_1 = - \sigma_1 \sigma_2$, the boundary conditions in \eqref{eq:Ddom} yield
\begin{multline}
-i(\nv \cdot \siv)\big[\TD_+\mathcal (\Con \psi)_+-\TD_-(\Con \psi)_-\big]
=-i(n_1 \sigma_1 + n_2 \sigma_2)(\TD_+ (\sigma_1\overline\psi)_+-\TD_-(\sigma_1\overline\psi)_-) 
\\
= \sigma_1(-in_1\sigma_1+i n_2\sigma_2)(\TD_+ \overline\psi_+-\TD_-\overline\psi_-)
= - \sigma_1\overline{(- i \nv \cdot \siv)(\TD_+ \psi_+-\TD_-\psi_-) }
\\
= -\sigma_1\,\tfrac{\eta}{2}\,\overline{(\TD_+ \psi_+ +\TD_-\psi_-)}
= -\tfrac{\eta}{2}\,\big[\TD_+(\Con\psi)_+ +\TD_-(\Con\psi)_-\big]\,.
\end{multline}
Similar arguments, can be used to prove that $\Con$ formally anti-commutes with the differential expression \eqref{eq:Daction}, that is
\begin{multline}
(-i(\sigma_1\partial_x+\sigma_2\partial_y)+m\sigma_3)(\Con\psi)_+ \oplus(-i(\sigma_1\partial_x+\sigma_2\partial_y)+m\sigma_3)(\Con\psi)_-
\\ 
= -\,\Con\left[ (-i(\sigma_1\partial_x+\sigma_2\partial_y)+m\sigma_3)\psi_+\oplus(-i(\sigma_1\partial_x+\sigma_2\partial_y)+m\sigma_3)\psi_-\right] .
\end{multline}
The above results yield
\begin{equation}\label{eq:CD}
	\Con \,\DDet = - \DD_{-\eta,0}\, \Con\,.
\end{equation}

Let us now deal with the operator $\UU$. Using again basic properties of the Pauli matrices $\sigma_1,\sigma_2$ (notice, in particular, that $(-i\, \nv \cdot \siv)^2 = -\one$, together with the boundary condition in \eqref{eq:Ddom}, by direct calculations we obtain
\begin{multline}
-i(\nv \cdot \siv) \big[\TD_+ (\UU\psi)_+ - \TD_-(\UU\psi)_-\big]
= -i(\nv \cdot \siv)(\TD_+\psi_+ + \TD_-\psi_-) 
\\
= \frac{2}{\eta}\,(-i\nv \cdot \siv)^2 (\TD_+\psi_+ - \TD_-\psi_-) 
= -\frac{2}{\eta}\,\big[(\TD_+(\UU\psi)_+ + \TD_-(\UU\psi)_-)\big] 
\\
= -\frac{\eta}{2}\,\big[(\TD_+(\UU\psi)_+ + \TD_-(\UU\psi)_-)\big]\,,
\end{multline}
where the last identity follows noting that, for critical interaction strengths $\eta = \pm 2$ and $\tau = 0$, there holds
\[
2/\eta = \eta/2\,.
\]
In addition, it is evident that $\UU$ formally commutes with the differential expression \eqref{eq:Daction}. Therefore, we have
\begin{equation}\label{eq:UD}
	\UU \,\DD_{\eta,0} = \DD_{-\eta,0} \,\UU \,.
\end{equation}

Combining the identities \eqref{eq:CD} and \eqref{eq:UD}, we finally obtain \eqref{eq:CUH}, whence the thesis.
\end{proof}


\section*{Acknowledgements}
We thank Jussi Behrndt for valuable insights and discussions related to this work.
\smallskip

\noindent
Numerical simulations and plots were generated using \texttt{MATLAB} (The MathWorks Inc., Natick, MA).

\section*{Funding}
The authors have been supported by MUR grant \emph{Dipartimento di Eccellenza 2023-2027} of Dipartimento di Matematica at Politecnico di Milano. W.B. is member of Gruppo Nazionale per l'Analisi Matematica, la Probabilità e le loro Applicazioni GNAMPA – INdAM in Italy. W.B. acknowledges that this study was carried out within the project E53D23005450006 ``Nonlinear dispersive equations in presence of singularities'' -- funded by European Union -- Next Generation EU within the PRIN 2022 program (D.D. 104 - 02/02/2022 Ministero dell'Universit\`a e della Ricerca).

\section*{Conflict of interest}
The author declares no conflict of interest.

\section*{Data availability statement}
Data sharing is not applicable to this article as it has no associated data.


\end{document}